\documentclass[12pt,a4paper]{amsart}
\usepackage{amssymb,enumerate}
\usepackage{tabularx}
\usepackage{tikz-cd}
\usepackage{geometry}
 \geometry{
 a4paper,
 total={170mm,257mm},
 left=15mm,
 right=15mm,
 top=15mm,
 bottom=8mm,
 }


\newcommand{\CC}{{\mathbb{C}}}
\newcommand{\FF}{{\mathbb{F}}}
\newcommand{\GG}{{\mathbb{G}}}

\newcommand{\PP}{{\mathbb{P}}}

\newcommand{\ZZ}{{\mathbb{Z}}}

\newcommand{\ba} {\mathbf a}
\newcommand{\bb} {\mathbf b}

\newcommand{\bk} {\mathbf k}

\newcommand{\bu} {\mathbf u}
\newcommand{\bv} {\mathbf v}

\newcommand{\bz} {\mathbf z}

\newcommand{\cF} {\mathcal F}
\newcommand{\cG} {\mathcal G}
\newcommand{\cH} {\mathcal H}

\newcommand{\cN} {\mathcal N}

\newcommand{\cX} {\mathcal X}
\newcommand{\cY} {\mathcal Y}


\newcommand{\Hom}{{\operatorname{Hom}}}

\newcommand{\Ch}{{\operatorname{Ch}}}

\newcommand{\E}{{\operatorname{\textbf{E}}}}
\newcommand{\Ind}{{\operatorname{Ind}}}

\newcommand{\Irr}{{\operatorname{Irr}}}

\newcommand{\PGL}{{\operatorname{PGL}}}

\newcommand{\PSp}{{\operatorname{PSp}}}
\newcommand{\PSU}{{\operatorname{PSU}}}

\newcommand{\SL}{{\operatorname{SL}}}
\newcommand{\stab}{{\operatorname{stab}}}
\newcommand{\Syl}{{\operatorname{Syl}}}

\newcommand{\Vol}{{\operatorname{Vol}}}

\makeatletter
\newcommand{\colim@}[2]{%
  \vtop{\m@th\ialign{##\cr
    \hfil$#1\operator@font colim$\hfil\cr
    \noalign{\nointerlineskip\kern1.5\ex@}#2\cr
    \noalign{\nointerlineskip\kern-\ex@}\cr}}%
}
\newcommand{\colim}{%
  \mathop{\mathpalette\colim@{\rightarrowfill@\textstyle}}\nmlimits@
}
\makeatother

\newcommand{\reg}{{\operatorname{reg}}}
\newcommand{\Aut}{{\operatorname{Aut}}}

\newcommand{\rk}{{\operatorname{rk}}}

\newcommand{\N}{{\operatorname{N}}}

\newcommand{\GL}{\operatorname{GL}}


\newtheorem{thm}{Theorem}[section]

\newtheorem{lem}[thm]{Lemma}
\newtheorem{cor}[thm]{Corollary}
\newtheorem{prop}[thm]{Proposition}
\newtheorem{conj}[thm]{Conjecture}
\newtheorem{question}[thm]{Question}

\theoremstyle{definition}
\newtheorem{rem}[thm]{Remark}
\newtheorem{defn}[thm]{Definition}
\newtheorem{exmp}[thm]{Example}
\newtheorem{notn}[thm]{Notation}

\numberwithin{equation}{section}

\raggedbottom

\begin{document}

\title{On character tables for fusion systems}

\author{Thomas Lawrence}
\address{Department of Mathematics, Loughborough University, LE11 3TT,
  U.K.}
\email{t.lawrence@lboro.ac.uk}

\author{Jason Semeraro}
\address{Department of Mathematics, Loughborough University, LE11 3TT,
  U.K.}
\email{j.p.semeraro@lboro.ac.uk}


\begin{abstract}
A character table $X$ for a saturated fusion system $\cF$ on a finite $p$-group $S$ is the square matrix of values associated to a basis of the lattice of virtual $\cF$-stable ordinary characters of $S$. 
We investigate a conjecture of the second author which equates the determinant of $X \overline{X}$ (the square of the volume of this lattice) with the product of the orders of $S$-centralisers of fully $\cF$-centralised $\cF$-class representatives. This statement is exactly column orthogonality for the character table of $S$ when $\cF=\cF_S(S)$. We prove the conjecture when $\cF=\cF_S(G)$ is realised by some finite group $G$ with Sylow $p$-subgroup $S$, and for all simple fusion systems when $|S| \le p^4$.   We also put forward a potential strategy for the general case, which would exploit properties of the characteristic idempotent of $\cF$.
\end{abstract}

\keywords{fusion systems, representation rings}

\subjclass[2010]{19A22, 20C15}

\date{\today}

\maketitle

\pagestyle{myheadings}


\section{Introduction}
\label{s:intro}
Let $\cF$ be a fusion system on a finite $p$-group $S$ and let $\cF^z \subseteq S$ be a set of $\cF$-conjugacy class representatives of fully $\cF$-centralised elements. A virtual character of $S$ is \textit{$\cF$-stable} if it takes a constant value on each $\cF$-conjugacy class. Regarding the set $\Ch(S)$ of virtual characters of $S$ as a lattice embedded in the space of complex valued class functions on $S$, let $\Ch(S)^\cF$ be the sublattice of $\Ch(S)$ consisting of $\cF$-stable characters. Motivated by supporting computational evidence, the second author was led to conjecture the following:

\begin{conj}[Semeraro]\label{c:main}
Let $\cF$ be a fusion system on a finite $p$-group $S$. If $\cF$ is saturated then the square of the volume of $\Ch(S)^\cF$ is exactly $$\prod_{s \in \cF^z} |C_S(s)|.$$
\end{conj}

Here, by the \textit{volume} of a complex lattice we mean the modulus of the determinant of a basis. Since any such basis must have size $|\cF^z|$ (see \cite[Lem. 2.1]{BC20}), Conjecture \ref{c:main} would provide a means of determining whether a given collection of $|\cF^z|$ linearly independent $\cF$-stable virtual characters is a basis for $\Ch(S)^\cF$. Now let $B$ be any basis $\Ch(S)^\cF$ of size $n=|\cF^z|$. For $\chi \in B$ and $s \in S$, we set $X_{\chi,s}=\chi(s)$ and write $X=X_B(\cF) \in  M_n(\CC)$ for the corresponding \textit{character table} of $\cF$ with respect to $B$. Thus the square of the volume of $\Ch(S)^\cF$ is $|\det(X)|^2$, and from this one can easily see that when $\cF=\cF_S(S)$, Conjecture \ref{c:main} holds by column orthogonality for $S$. In general, it is not too difficult to see that $|\det(X)|^2$ is an integer, and in fact a $p$-power, using the well-known fact due to Park \cite{P16} that $\cF$ is always realised by a finite group $G$ containing $S$ (see Remark \ref{r:integral}). If, moreover, $S$ is a Sylow $p$-subgroup of $G$, the following is true:

\begin{thm}[Lawrence, Olsson]\label{t:groupcase}
Conjecture \ref{c:main} holds  when $\cF=\cF_S(G)$ is the fusion system of a finite group $G$ in which $S$ is a Sylow $p$-subgroup.
\end{thm}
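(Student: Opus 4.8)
The plan is to transport the whole question to the ordinary character table of $G$, where column orthogonality turns the left-hand side into a single integer, and then to show that this integer is prime to $p$.

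First I would clear away the formal points. Writing $X=X_B(\cF)$, the matrix $X\overline{X}^T$ has determinant $|\det X|^2$, so the left-hand side is $|\det X|^2$ and is visibly independent of $B$; I am therefore free to choose $B$ and the representatives conveniently. Since $\cF=\cF_S(G)$, two elements of $S$ are $\cF$-conjugate if and only if they are $G$-conjugate, so $\cF^z$ is a set of representatives for the $G$-classes of $p$-elements of $G$; choosing each $s$ fully $\cF$-centralised means $C_S(s)\in\Syl_p(C_G(s))$, so $|C_S(s)|=|C_G(s)|_p$. The target is then $v_p(|\det X|^2)=\sum_{s\in\cF^z}v_p(|C_G(s)|)$.

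Next comes the key reduction. Let $h=|\Irr(G)|$ and $n=|\cF^z|$, and let $T'$ be the $h\times n$ submatrix of the character table of $G$ recording $\psi(s)$ for $\psi\in\Irr(G)$ and $s\in\cF^z$. Each $\psi|_S$ lies in the ring $R$ of $\cF$-stable virtual characters, so I may write it in the integral basis $B$; collecting the coefficients gives a matrix $C\in M_{h\times n}(\ZZ)$ with $T'=CX$. Column orthogonality for $G$ yields the clean identity $\overline{T'}^{\,T}T'=\operatorname{diag}(|C_G(s)|)_{s\in\cF^z}$, and taking determinants gives
\[
\prod_{s\in\cF^z}|C_G(s)| \;=\; \det\!\big(\overline{T'}^{\,T}T'\big) \;=\; |\det X|^2\,\det(C^{T}C),\qquad M:=\det(C^{T}C)\in\ZZ_{>0}.
\]
Comparing $p$-parts, the theorem is now \emph{exactly} the statement that $p\nmid M$.

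To get a handle on $M$ I would bring in two further inputs. By Frobenius reciprocity the columns of $C$ are the integral characters $\Ind_S^G(\beta_i)$, where $\{\beta_i\}$ is the $\langle\,,\rangle_S$-dual basis of $B$, so $M=\det\big(\langle \Ind_S^G\beta_i,\Ind_S^G\beta_j\rangle_G\big)=\det(\Res^G_S\Ind_S^G)\,\det(S_R)^{-1}$, with $S_R$ the Gram matrix of $B$ for $\langle\,,\rangle_S$. A direct double-coset count shows that the self-adjoint operator $\Res^G_S\Ind_S^G$ is diagonal in the basis of $\cF$-class indicators, acting on the class of $s$ by the scalar $\mu_s=|C_G(s)|\,|s^G\cap S|/|S|$; hence $M=\prod_{s}\mu_s/\det(S_R)$. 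This pins down the main obstacle: I must prove $v_p(\det S_R)=\sum_{s}v_p(\mu_s)$, and the small cases suggest both sides in fact vanish. The factor $\prod_s\mu_s$ is governed by the fusion-counting identity $v_p(|s^G\cap S|)=v_p([S:C_S(s)])$ for fully centralised $s$ (so that the $p$-adically dominant $S$-class does not over-cancel), which should be tractable. The genuinely hard ingredient is the $p$-local behaviour of $\det S_R$, i.e. the $p$-local unimodularity of the $\cF$-stable character lattice for $\langle\,,\rangle_S$: this is delicate precisely because the orthogonal projection onto $\cF$-stable functions carries the centraliser orders $|C_S(\cdot)|$ in its denominators and so is not visibly $p$-integral. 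I would attack it $p$-locally, using that $\Res^G_S$ is surjective after inverting $p$ (a Mackey/transfer argument, valid since $p\nmid[G:S]$) together with Brauer's generalised decomposition numbers — the factorisation $T'=DE'$ and the orthogonality $D^{*}D=\bigoplus_{s}$ (Cartan matrix of $C_G(s)$) — to control the $p$-adic valuations of these determinants. Establishing $p\nmid M$ by this route, rather than the formal reductions that precede it, is where the real work of the theorem lies.
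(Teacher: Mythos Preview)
Your reduction is exactly the paper's: set $D$ to be your $C$, let $C:=D^TD$ (the entries are integers, so conjugation is irrelevant), and column orthogonality for $G$ gives
\[
|X\overline{X^T}|\cdot|C|=\prod_{s\in\cF^z}|C_G(s)|,
\]
so the theorem is equivalent to $(|C|,p)=1$. Up to this point you and the paper agree verbatim.

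The divergence---and the gap---is in how you finish. You rewrite $|C|$ via Frobenius reciprocity as a quotient $\prod_s\mu_s/\det(S_R)$ and then face two separate $p$-adic statements (that $v_p(\mu_s)=0$ for each $s$, and that the Gram matrix $S_R$ of the $\cF$-stable lattice is $p$-locally unimodular), neither of which you prove; you yourself flag the second as ``the genuinely hard ingredient'' and only sketch a line of attack through generalised decomposition numbers. This is not a complete argument, and the route through $\det(S_R)$ is genuinely delicate for the reason you identify: the averaging projection onto $\cF$-stable class functions has $|C_S(\cdot)|$ in the denominators.

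The paper sidesteps all of this with a one-line trick you are missing. From $\overline{X^T}CX=\Delta$ one reads off $C^{-1}=X\Delta^{-1}\overline{X^T}$ explicitly:
\[
(C^{-1})_{\psi,\phi}=\sum_{s\in\cF^z}\frac{\psi(s)\overline{\phi(s)}}{|C_G(s)|}.
\]
Now for each $\psi\in B$ define the $G$-class function $\psi_0$ that equals $|G|_{p'}\psi(s)$ on the $G$-class of $s\in S$ and is zero off $p$-elements. Brauer's characterisation of characters (checked on $p$-elementary subgroups, where the Sylow hypothesis lets one conjugate $O_p(E)$ into $S$) shows $\psi_0\in\ZZ[\Irr(G)]$. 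Then $|G|_{p'}^2(C^{-1})_{\psi,\phi}=\langle\psi_0,\phi_0\rangle_G\in\ZZ$, so $|G|_{p'}^{2n}/|C|\in\ZZ$ and hence $(|C|,p)=1$. No Gram matrices, no $\mu_s$, no lattice unimodularity: the inverse of $C$ is already sitting there, and Brauer's theorem clears the denominators in one stroke. This is the missing idea in your proposal.
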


In Section \ref{s:groupcase} we present a proof of Theorem \ref{t:groupcase}, using elementary applications of  Brauer's characterisation of characters and column orthogonality for $G$.  A version of this proof first appeared in \cite{L24}, and Benjamin Sambale subsequently pointed out that a similar observation had already been made by Olsson in \cite{O82} using $\pi$-blocks. All proofs use that $S$ is a Sylow $p$-subgroup of $G$ in an essential way; and indeed the saturation hypothesis in Conjecture \ref{c:main} is necessary (see Example \ref{ex:c8}).

Thus to prove Conjecture \ref{c:main} we may assume that $\cF$ is exotic. Recall that $\cF$ is \textit{transitive} on $S$ if $\cF^z=\{1,z\}$ for some $1 \neq z \in Z(S)$. For such a fusion system $\cF$, it turns out that the set $B=\{1_S, \reg_S-1_S\}$ is basis for $\Ch(S)^\cF$ where $\reg_S$ denotes the regular character of $S$ (see \cite[Lemma 2.16]{CC23}). This means that $X_B(\cF)=\left(\begin{smallmatrix} 1 & 1 \\ |S|-1 & -1\end{smallmatrix}\right)$ and Conjecture \ref{c:main} holds trivially.  In particular we see that Conjecture \ref{c:main} holds for the three Ruiz-Viruel exotic fusion systems on $7$-groups of order $7^3$ appearing in \cite{RV04}, all of which are transitive. Thus by \cite[Thm 1.1]{RV04}, Conjecture \ref{c:main} holds if $|S|=p^3$ and so we may also assume that $|S|  \ge p^4$.  Our second main theorem is the following:

\begin{thm}\label{t:main}
Conjecture \ref{c:main} holds if $\cF$ is simple and $|S| \le p^4$.
\end{thm}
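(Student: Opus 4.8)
The plan is to reduce the statement to a finite, explicit list of exotic simple fusion systems and to verify the determinant formula for each. By Theorem \ref{t:groupcase} any $\cF$ of the form $\cF_S(G)$ with $S$ Sylow in $G$ already satisfies the conjecture, so I may assume throughout that $\cF$ is exotic. For $p=2$ there are no exotic simple fusion systems on $2$-groups of order at most $2^4=16$, so every simple $\cF$ in that range is realizable and nothing remains to check; and the case $|S|=p^3$ has already been disposed of in the introduction via \cite{RV04}. Since a saturated fusion system on an abelian $p$-group $S$ has $O_p(\cF)=S$ and is therefore never simple, the only outstanding case is $|S|=p^4$ with $p$ odd. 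At this point I would invoke the classification of simple fusion systems on $p$-groups of order $p^4$, which leaves only finitely many exotic systems to be examined.

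For each such $\cF$ the computation is in principle completely explicit. First I would use Alperin's fusion theorem to read off the $\cF$-conjugacy of elements of $S$ from the automizers $\Aut_{\cF}(E)$ of the (few) essential subgroups $E$ together with $\Aut_{\cF}(S)$; this determines the set $\cF^z$, the $\cF$-class of each element, and the centralizer orders $|C_S(s)|$ that appear on the right-hand side. A virtual character of $S$ is $\cF$-stable precisely when it is constant on $\cF$-classes of elements, so the lattice $R_{\cF}(S)$ of $\cF$-stable virtual characters is cut out inside $\ZZ\,\Irr(S)$ by the linear equations $\chi(x)=\chi(y)$ for $\cF$-conjugate $x,y$. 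Computing a $\ZZ$-basis $B$ of this sublattice, evaluating at $\cF^z$, and taking $|\det X_B(\cF)|^2$ then reduces the conjecture in each case to a finite determinant computation, from which the $p$-part can be extracted.

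To organize these computations I would exploit the identity that, for $\cF$-stable $\chi,\psi$, the ordinary inner product on $S$ reads $\langle\chi,\psi\rangle_S=|S|^{-1}\sum_{s\in\cF^z}N_s\,\chi(s)\overline{\psi(s)}$, where $N_s$ is the number of elements of $S$ that are $\cF$-conjugate to $s$. This expresses $|\det X_B(\cF)|^2$ as the discriminant of the integral lattice $R_{\cF}(S)$ (with respect to the form making $\Irr(S)$ orthonormal) multiplied by the elementary factor $|S|^{|\cF^z|}/\prod_{s}N_s$, so that the conjecture becomes the assertion that the $p$-part of this product equals $\prod_{s\in\cF^z}|C_S(s)|$. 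The transitive systems, for which $\cF^z=\{1,z\}$, are immediate as in the introduction, and the remaining systems are handled by the explicit basis described above.

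The main obstacle is twofold. First, one must have a complete and reliable classification of the simple fusion systems on groups of order $p^4$, so that no exotic case is missed. Second, and more seriously, for a genuinely exotic $\cF$ there is no Sylow-realizing group from which to borrow characters, so unlike in Theorem \ref{t:groupcase} one cannot appeal to column orthogonality for an ambient group; the $\cF$-stable characters must be constructed directly from the fusion data and the determinant evaluated in each case. I expect the bulk of the work, and the only real difficulty, to lie in carrying out these lattice and determinant computations uniformly enough across the finitely many exotic systems to keep the argument manageable.
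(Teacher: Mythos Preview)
Your overall reduction is sound: realisable systems are covered by Theorem \ref{t:groupcase}, the abelian and $|S|=p^3$ cases fall away as you describe, and what remains are the simple exotic systems on groups of order $p^4$ for odd $p$. But your central claim that this leaves only ``finitely many exotic systems to be examined'' is false. By the Moragues Moncho classification (Theorem \ref{t:p4class}), for \emph{every} prime $p \ge 7$ there are exotic simple systems $O^{p'}(\cF_{(1)})$ and $\cG$ on a Sylow $p$-subgroup of $\PSp_4(p)$, so the list to be handled is infinite, parametrised by $p$. A direct per-system computation of a $\ZZ$-basis for $\Ch(S)^\cF$ and its Gram determinant, as you propose, cannot therefore be carried out one prime at a time; the determinant must be evaluated uniformly as a function of $p$, and your proposal offers no mechanism for doing this beyond hoping the computations are ``uniform enough''.

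The ingredient your proposal lacks is Proposition \ref{p:lattice}. Rather than compute $\Ch(S)^\cF$ from the raw linear constraints, one chooses a realisable saturated subsystem $\cN=N_\cF(V) \subsetneq \cF$ (so Conjecture \ref{c:main} holds for $\cN$ by Theorem \ref{t:groupcase}), takes the basis $B_\cN=\{\chi|_S:\chi\in\Irr(N)\}$ supplied by Lemma \ref{l:indfnormal}, and builds a candidate $B_\cF$ from $B_\cN$ by explicit $\NN$-linear combinations. When $\cN$ and $\cF$ differ by fusing a single pair of classes $\{\bu,\bz\}$ with $|C_S(\bu)|=p^2$, a row-and-column elimination gives $|X_{B_\cN}(\cN)|=\pm p\,|X_{B_\cF}(\cF)|$, and a lattice-index argument (Lemma \ref{l:latticeresult}) then forces $B_\cF$ to be a genuine basis of $\Ch(S)^\cF$. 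This reduces the two infinite families to writing down $B_\cN$ and $B_\cF$ as explicit functions of $p$ (Tables \ref{t:indnf1}--\ref{t:fchartab}), which is routine Clifford theory in $N=V\rtimes\Gamma$; only the finitely many exceptional cases at $p\le 5$ are then handled by machine. Your discriminant reformulation does not bypass any of this: computing the discriminant of $\Ch(S)^\cF$ for varying $p$ is the same problem in different clothing, and still requires producing a basis uniformly in $p$.
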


To prove Theorem \ref{t:main}, suppose $\cF$ is a simple saturated fusion system on a $p$-group $S$ of order $p^4$. If $p=2$ then $\cF$ is realisable by \cite[Thm. 4.1]{AOV16} so we may assume that $p \ge 3$.  For such $p$, the list of possibilities for $\cF$ was compiled by Moragues Moncho in \cite{M18} and this is recalled in Section \ref{s:sfs}. There are three such systems, all  on a Sylow $p$-subgroup of $\PSp_4(p)$, two of which are exotic. For $p \ge 7$ these are the only exotic examples which occur, whereas for $p\le 5$ some further exceptional cases arise. Our verification of Theorem \ref{t:main} now proceeds via  an inductive approach. Briefly, we choose a suitable realisable subsystem $\cN \subseteq \cF$ and  apply Theorem \ref{t:groupcase} to construct a basis $B_\cF$ of $\cF$-stable characters by taking linear combinations of an $\cN$-stable basis $B_\cN$ of $\cN$-stable characters. Row operations on $X_{B_\cN}(\cN)$ then allow us to express $\det(X_{B_\cN})$ in terms of $\det(X_{B_\cF})$ in such a way that Conjecture \ref{c:main} for $\cF$ follows from Theorem \ref{t:groupcase} for $\cN$ (see Proposition \ref{p:lattice}). It seems likely that a similar approach would work more broadly for classes of fusion systems on $p$-groups which contain an abelian subgroup of index $p$, but this method is unlikely to work in general; in Section \ref{s:possapproach}, we propose a structural explanation for Conjecture \ref{c:main}, aimed at exploiting properties of the characteristic idempotent of $\cF$.
 \medskip \mbox{}\newline\textbf{Structure of the paper:}  We start by recalling some  background results on $\cF$-stable characters in Section \ref{s:fstable}. Theorem \ref{t:groupcase} is proved in Section \ref{s:groupcase}, following the approach taken in \cite{L24}. In Section \ref{s:sfs}, we introduce notation to describe simple exotic fusion systems on $p$-groups of order $p^4$, recall their classification in Theorem \ref{t:p4class} and then establish Theorem \ref{t:main} via a case-by-case check. We apply Theorem \ref{t:main} in the Appendix to list explicit bases for the representation rings of fusion systems over $p$-groups of order $p^4$. This leads, in particular, to an infinite family of exotic fusion systems which are \textit{non-factorial} (see Remark \ref{r:nonfact}). \medskip \mbox{}\newline \textbf{Acknowledgements:} We thank Benjamin Sambale for making us aware of Olsson's work \cite{O82} and for the helpful discussions which followed. J.~Semeraro gratefully acknowledges funding from the
UK Research Council EPSRC for the project EP/W028794/1.

\section{The ring of $\cF$-stable virtual characters}\label{s:fstable}

Recall that we write $\Ch(S)^\cF$ for the ring of virtual $\cF$-stable characters of $S$. We say that a character $\chi \in \Ch(S)^\cF$  is \textit{indecomposable} if whenever  $\chi=\chi_1+\chi_2$ for a pair of characters $\chi_1,\chi_2 \in \Ch(S)^\cF$ we have $\chi=\chi_1$ or $\chi_2$. Following \cite{S24}, write $\Ind(\cF)$ for the set of indecomposable characters of $\Ch(S)^\cF$. The following is an elementary consequence of this definition.

\begin{lem}\label{l:ind}
We have, $\langle \Ind(\cF) \rangle_\ZZ = \Ch(S)^\cF$.
\end{lem}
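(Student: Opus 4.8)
The containment $\langle \Ind(\cF)\rangle_\ZZ \subseteq \Ch(S)^\cF$ is immediate, since every element of $\Ind(\cF)$ lies in $\Ch(S)^\cF$ and the latter is a subgroup of $\ZZ[\Irr(S)]$. The plan is therefore to establish the reverse inclusion in two stages: first show that every genuine (non-virtual) $\cF$-stable character is a non-negative integer combination of indecomposables, and then reduce an arbitrary element of $\Ch(S)^\cF$ to a difference of genuine ones.

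For the first stage, let $M^+ = \Ch(S)^\cF \cap \ZZ_{\ge 0}[\Irr(S)]$ denote the monoid of genuine $\cF$-stable characters, and argue by induction on the degree $\chi(1) \in \ZZ_{\ge 0}$ of $\chi \in M^+$. If $\chi$ is indecomposable there is nothing to prove; otherwise, by definition we may write $\chi = \chi_1 + \chi_2$ with $\chi_1,\chi_2 \in \Ch(S)^\cF$ genuine characters and $\chi \neq \chi_1,\chi_2$. Since $\chi_1,\chi_2$ are genuine characters summing to $\chi$, neither can be zero (else the other would equal $\chi$), so both have strictly smaller degree. By the inductive hypothesis each is a sum of indecomposables, hence so is $\chi$. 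This shows $M^+ \subseteq \langle \Ind(\cF)\rangle_\ZZ$.

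For the second stage, I would use the fact recalled in Section \ref{s:intro} that the regular character $\reg_S$ is $\cF$-stable. Given any $\psi \in \Ch(S)^\cF$, write $\psi = \sum_{\chi \in \Irr(S)} a_\chi \chi$ with $a_\chi \in \ZZ$, and note that the coefficient of each $\chi$ in $\psi + n\,\reg_S$ equals $a_\chi + n\,\chi(1)$, which is non-negative once $n$ is chosen large enough (the sum being finite). For such $n$ both $\psi + n\,\reg_S$ and $n\,\reg_S$ lie in $M^+$, being $\cF$-stable with non-negative coefficients, and the identity $\psi = (\psi + n\,\reg_S) - n\,\reg_S$ exhibits $\psi$ as a $\ZZ$-combination of elements of $M^+$. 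Combining the two stages yields $\Ch(S)^\cF = \langle M^+\rangle_\ZZ \subseteq \langle \Ind(\cF)\rangle_\ZZ$, and hence the desired equality.

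The argument is essentially formal, and I do not anticipate a serious obstacle. The only point requiring care is the second stage, where one must produce a single genuine $\cF$-stable character whose addition renders all coefficients non-negative simultaneously; this is precisely what the $\cF$-stability of $\reg_S$ supplies, and I expect it to be the only genuine input beyond the definition of indecomposability.
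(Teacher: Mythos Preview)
Your proof is correct and follows essentially the same approach as the paper: both arguments rest on the observation that a genuine $\cF$-stable character decomposes as a sum of indecomposables, established by descent on degree. The paper's one-line proof leaves the passage from virtual to genuine characters implicit, whereas you make this reduction explicit via the $\cF$-stability of $\reg_S$; that is a detail the paper elides rather than a genuinely different strategy.
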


\begin{proof}
Any character $\chi \in \Ch(S)^\cF$ which does not lie in the $\ZZ$-span of $\Ind(\cF)$ must contain an $\cF$-indecomposable constituent with the same property, a contradiction.
\end{proof}

Let $\bk(\cF)$ denote the the number of $\cF$-conjugacy classes of elements of $S$.  By \cite[Lem. 2.1]{BC20}, $\rk_\ZZ(\Ch(S)^\cF)=\bk(\cF)$, and in \cite[Thm. 1]{S24} it is shown that $\bk(\cF) \le |\Ind(\cF)|$. Using Brauer's characterisation of characters, one can easily establish the following: 

\begin{prop}\label{prop:brauer}
Suppose $\cF=\cF_S(G)$ for some finite group $G$ with Sylow $p$-subgroup $S$. Then $\Ch(S)^\cF = \ZZ[\chi|_S \mid \chi \in \Irr(G)]$.
\end{prop}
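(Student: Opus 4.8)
```latex
The plan is to prove the two inclusions separately, with the forward inclusion being the routine direction and the reverse inclusion relying on Brauer's characterisation of characters. First I would establish that $\ZZ[\chi|_S \mid \chi \in \Irr(G)] \subseteq \Ch(S)^\cF$. For this, it suffices to show that the restriction $\chi|_S$ of any $\chi \in \Irr(G)$ is $\cF$-stable, since $\Ch(S)^\cF$ is a subring closed under $\ZZ$-linear combinations. Given $\varphi \in \Hom_\cF(Q,S)$ for $Q \le S$, by the definition of $\cF = \cF_S(G)$ this morphism is induced by conjugation by some $g \in G$, so for $x \in Q$ we have $\chi|_S(\varphi(x)) = \chi(g x g^{-1}) = \chi(x) = \chi|_S(x)$ using that $\chi$ is a class function on $G$. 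Hence $\chi|_S$ is $\cF$-stable, and the forward inclusion follows.

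For the reverse inclusion $\Ch(S)^\cF \subseteq \ZZ[\chi|_S \mid \chi \in \Irr(G)]$, I would take an arbitrary $\cF$-stable virtual character $\psi \in \Ch(S)^\cF$ and aim to realise it as a $\ZZ$-combination of restrictions of irreducible characters of $G$. The key idea is to pass from $S$ to $G$ via induction and restriction, using that $S$ is \emph{Sylow}. By Brauer's characterisation, a class function $\theta$ on $G$ is a virtual character precisely when its restriction to every \emph{elementary} subgroup $E \le G$ is a virtual character of $E$. The natural candidate to consider is an induced class function $\Ind_S^G \psi$, or rather a suitably corrected version of it; the $\cF$-stability of $\psi$ should guarantee that the induced function behaves compatibly on $p$-elements, while the Sylow hypothesis controls the relationship between $G$-conjugacy and $\cF$-fusion on $p$-subgroups.

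The hard part will be the reverse inclusion, and specifically verifying the Brauer criterion on elementary subgroups. An elementary subgroup $E$ factors as $E = E_p \times E_{p'}$ with $E_p$ a $p$-group and $E_{p'}$ a $p'$-group, and one must check that the restriction of the candidate virtual character of $G$ to each such $E$ is genuinely a virtual character of $E$. The $\cF$-stability of $\psi$ must be leveraged precisely here: after conjugating, the $p$-part $E_p$ may be taken inside $S$ (since $S$ is Sylow, every $p$-subgroup is $G$-conjugate into $S$), and the values of the induced function on $E$ should reduce, via a Mackey-type computation, to values of $\psi$ and its $\cF$-twists — which agree by stability. Keeping track of the $p'$-component and confirming that the resulting class function on $E$ lands in $\ZZ[\Irr(E)]$ is where the genuine bookkeeping lies. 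Once Brauer's criterion is verified, the candidate is a virtual character $\Theta$ of $G$, and a final step computes $\Theta|_S$ and checks it equals $\psi$ (up to a correction that itself lies in the span), completing the inclusion and hence the equality.
```
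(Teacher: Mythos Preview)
Your forward inclusion is correct and cleanly argued. The reverse inclusion, however, has a genuine gap in the choice of the candidate class function $\Theta$ on $G$.

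Induction is the wrong construction here. Note first that $\Ind_S^G\psi$ is \emph{automatically} a virtual character of $G$ whenever $\psi$ is a virtual character of $S$; invoking Brauer's criterion for this is superfluous. The real difficulty is that, by Mackey,
\[
(\Ind_S^G\psi)\big|_S \;=\; \psi \;+\; \sum_{SgS\neq S}\Ind_{S\cap{}^gS}^{S}\!\bigl({}^g\psi\big|_{S\cap{}^gS}\bigr),
\]
and there is no reason the extra summands lie in $\ZZ[\chi|_S\mid\chi\in\Irr(G)]$; your suggestion that the ``correction lies in the span'' is precisely what must be proved, so the argument becomes circular. Your hedge ``a suitably corrected version'' is never made precise enough to escape this.

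The standard argument (as in the reference the paper cites, and mirrored in the construction of $\psi_0$ in the proof of Proposition~\ref{p:groupcase}) uses a different extension: for $g\in G$ set $\tilde\psi(g):=\psi(s)$, where $s\in S$ is any $G$-conjugate of the $p$-part $g_p$ of $g$. This is well defined because $S\in\Syl_p(G)$ and $\psi$ is $\cF$-stable, and it satisfies $\tilde\psi|_S=\psi$ on the nose since $s_p=s$ for $s\in S$. The Brauer check is then exactly the computation you anticipated: for an elementary subgroup $E=E_p\times E_{p'}$, after conjugating so that $E_p\le S$, one has $\tilde\psi|_E = \psi|_{E_p}\otimes 1_{E_{p'}}$, which is a virtual character of $E$. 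Hence $\tilde\psi\in\ZZ[\Irr(G)]$ and $\psi=\tilde\psi|_S\in\ZZ[\chi|_S\mid\chi\in\Irr(G)]$. All of your structural observations about elementary subgroups and the role of the Sylow hypothesis are correct; only the candidate $\Theta$ needs to be replaced by this $p$-part extension rather than an induced character.
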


\begin{proof}
See, for example, \cite[Thm. 4]{S24}.
\end{proof}

In the special case when $S \unlhd G$, we have the following stronger result, which is an elementary consequence of Clifford theory:

\begin{lem}\label{l:indfnormal}
Suppose $G$ is finite group with a normal Sylow $p$-subgroup $S$ and let $\cF=\cF_S(G)$. Then  $\Ind(\cF)=\{\chi|_S \mid \chi \in \Irr(G)\}.$ In particular, $|\Ind(\cF)|=\bk(\cF)$.
\end{lem}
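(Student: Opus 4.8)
The plan is to exploit the fact that $S$ being a \emph{normal} Sylow $p$-subgroup makes $\cF=\cF_S(G)$ as simple as possible: every morphism in $\cF$ is the restriction of conjugation by some element of $G=N_G(S)$, so two characters (indeed two elements) of $S$ are $\cF$-conjugate exactly when they are $G$-conjugate. First I would record the consequence that a class function on $S$ is $\cF$-stable if and only if it is $G$-invariant, so that $\Ch(S)^\cF$ is precisely the lattice of $G$-invariant virtual characters of $S$. Writing $\Phi_\cO=\sum_{\psi\in\cO}\psi$ for the orbit sum attached to a $G$-orbit $\cO$ on $\Irr(S)$, the $\Phi_\cO$ form a $\ZZ$-basis of $\Ch(S)^\cF$; since each $\Phi_\cO$ is multiplicity-free and supported on a single orbit it is visibly indecomposable, while any $\cF$-stable genuine character is a non-negative combination of the $\Phi_\cO$. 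Hence $\Ind(\cF)$ is exactly the set of orbit sums, indexed by the $G$-orbits on $\Irr(S)$.

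The heart of the argument is to realise each orbit sum as a restriction $\chi|_S$ for a suitable $\chi\in\Irr(G)$, using Clifford theory in the coprime regime. Fixing $\psi\in\cO$ with inertia group $T=I_G(\psi)$, the point is that $G/S$ is a $p'$-group, so $(|S|,[T:S])=1$ and the $T$-invariant character $\psi$ extends to some $\tilde\psi\in\Irr(T)$ by the coprime extension theorem (Isaacs, Corollary 8.16). The Clifford correspondence then gives $\chi:=\tilde\psi^{G}\in\Irr(G)$, and a short Mackey/Frobenius computation using $\Res_S\tilde\psi=\psi$ and $S\unlhd G$ yields $\chi|_S=\sum_{g\in G/T}\psi^{g}=\Phi_\cO$. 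This produces the inclusion $\Ind(\cF)\subseteq\{\chi|_S\mid\chi\in\Irr(G)\}$ and simultaneously singles out a distinguished irreducible of $G$ over each orbit. I expect this extension step to be the main obstacle: one needs the indecomposables to be genuine restrictions of \emph{irreducible} characters of $G$, not merely elements of the lattice guaranteed by Proposition \ref{prop:brauer}, and this is exactly where the coprimality $(|S|,[G:S])=1$ is indispensable.

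For the remaining inclusion I would run the same Gallagher analysis for an arbitrary $\chi\in\Irr(G)$: such $\chi$ lies over a unique orbit $\cO$, has the form $(\beta\tilde\psi)^{G}$ for some $\beta\in\Irr(T/S)$, and restricts to $\chi|_S=\beta(1)\,\Phi_\cO$, a non-negative multiple of an orbit sum whose indecomposable instances are precisely those with $\beta(1)=1$. Combined with the previous paragraph this identifies $\Ind(\cF)$ with the restrictions arising from the distinguished Clifford correspondents, establishing the displayed equality. The count then follows formally: $|\Ind(\cF)|$ is the number of $G$-orbits on $\Irr(S)$, which by Brauer's permutation lemma equals the number of $G$-orbits on the conjugacy classes of $S$, that is, $\bk(\cF)$.
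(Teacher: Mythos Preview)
Your Clifford-theoretic argument is essentially correct and considerably more explicit than the paper's, which simply cites \cite[Thm.~2]{S24}. You correctly identify $\Ind(\cF)$ with the set of $G$-orbit sums $\Phi_{\cO}$ on $\Irr(S)$, realise each $\Phi_{\cO}$ as a restriction $\chi|_S$ via the coprime extension theorem and Clifford correspondence, and obtain the count $|\Ind(\cF)|=\bk(\cF)$ from Brauer's permutation lemma. This is the natural route and almost certainly the one taken in \cite{S24}.

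There is, however, a genuine gap in your claimed reverse inclusion. Your own Gallagher computation gives $\chi|_S=\beta(1)\,\Phi_{\cO}$ for $\beta\in\Irr(T/S)$, and you correctly observe this is indecomposable only when $\beta(1)=1$; but nothing in the hypotheses forces $\beta(1)=1$. Whenever some inertial quotient $T/S$ is non-abelian there exist $\chi\in\Irr(G)$ whose restriction is a proper multiple of an orbit sum and hence decomposable. Concretely, take $p=3$, $S=C_3\times C_3$, and $G=S\rtimes Q_8$ with $Q_8\le\SL_2(3)$ acting faithfully: the two-dimensional irreducible of $Q_8$ inflates to $\chi\in\Irr(G)$ with $\chi|_S=2\cdot 1_S\notin\Ind(\cF)$, so $\{\chi|_S:\chi\in\Irr(G)\}\supsetneq\Ind(\cF)$. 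Thus the displayed equality is too strong as literally written; what survives, and what you have actually proved, is the inclusion $\Ind(\cF)\subseteq\{\chi|_S:\chi\in\Irr(G)\}$ together with $|\Ind(\cF)|=\bk(\cF)$. This suffices for every application in the paper (indeed, in the sole explicit use the quotient $N/S$ is cyclic, so equality does hold there), but you should flag the discrepancy rather than assert that your argument ``establishes the displayed equality''.
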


\begin{proof}
See \cite[Thm. 2]{S24}.
\end{proof}

Write $\Vol(L)$ for the volume of  a complex lattice $L$. We require the following basic fact concerning this invariant:

\begin{lem}\label{l:latticeresult}
If $M \subseteq L$ are $\ZZ$-lattices of equal rank then from any basis $\{b_1,\ldots, b_n\}$ of $L$, one can construct a basis $\{a_1,\ldots,a_n\}$ of $M$ of shape $$\begin{array}{rcl} a_1 & = & v_{11}b_1 \\ a_2 & = & v_{21}b_1+v_{22}b_2 \\
\vdots & = & \vdots \\
a_n& = & v_{n1}b_1+\cdots+ v_{nn}b_n, 
\end{array}
$$ where each $v_{ij} \in \ZZ$ and $v_{ii} \neq 0$ for all $i$.
\end{lem}

\begin{proof}
See \cite[Thm. 1A]{C96}.
\end{proof}

The following application of Lemma \ref{l:latticeresult} is the main technical result that we will use to prove Theorem \ref{t:main}.

\begin{prop}\label{p:lattice}
Suppose that $\cF$ is a saturated fusion system on $S$, and let $\cN \subsetneq \cF$ be a saturated subsystem on $S$ for which Conjecture \ref{c:main} holds. Suppose moreover that $\bk(\cF)+1=\bk(\cN)$ and there are a fully $\cN$-centralised element $\bu$, a fully $\cF$-centralised element $\bz$ and an $\cN$-stable character $\eta$ contained in some basis $B_\cN$ of $\Ch(S)^\cN$ for which: $$\bu^\cN \neq \bz^\cN, \qquad \bu^\cF = \bz^\cF, \qquad |C_S(\bu)|=p^2, \qquad \mbox{ and } \qquad \eta(\bu)-\eta(\bz)=\pm p.$$ Then Conjecture \ref{c:main} holds for $\cF$.
\end{prop}

\begin{proof}
The hypotheses imply that we have a containment of lattices of equal rank: $$\Ch(S)^\cF \oplus \langle \eta \rangle \subseteq \Ch(S)^\cN.$$ Applying Lemma \ref{l:latticeresult} to this containment with respect to the basis $B=B_\cN$ for $\Ch(S)^\cN$, yields a basis $B'$ for $\Ch(S)^\cF \oplus \langle \eta \rangle$ which, by the minimal choice of the $v_{ij}$ in the proof of Lemma \ref{l:latticeresult}, we may assume contains $\eta$. In particular, the character values of $B' \backslash \{\eta\}$ in $X_{B'}(\cN)$ coincide on $\bu$ and $\bz$. Now subtracting the column corresponding to $\bz$ from that corresponding to $\bu$ in this matrix and taking determinants, we obtain $$\pm p \cdot \Vol(\Ch(S)^\cF)=\Vol(\Ch(S)^\cN).$$ On the other hand, since one may choose $S^\cF$ and $S^\cN$ so that $$p^2 \cdot \prod_{s \in S^{\cF}} |C_S(s)| =\prod_{s \in S^{\cN}} |C_S(s)|, $$ we see that Conjecture \ref{c:main} holds for $\cF$.

 \end{proof}

We close this section by showing that the conclusion of Conjecture \ref{c:main} is false for non-saturated fusion systems.

\begin{exmp}\label{ex:c8}
Suppose $S=\langle \ba \rangle \cong C_8$, set $P:=\langle \ba^2 \rangle$ and $\cF:=\langle \cF_S(S); \Aut(P)\rangle_S$. Then $\cF$ is not saturated since the non-trivial element in $\Aut(P)$ which fuses $\ba^2$ to $\ba^6$ does not extend to $S$. Let $\omega:=\exp(2 \pi i / 8)$ and let $\chi_j \in \Irr(S)$ be given by $\chi_j(\ba^k):=\omega^{jk}$ for $0 \le j \le 7$. One easily calculates that: $$\Ind(\cF) = \{\chi_0,\chi_2,\chi_4,\chi_6, \chi_1+\chi_3, \chi_3+\chi_5,\chi_5+\chi_7, \chi_7+\chi_1\}$$ and so $\Ch(S)^\cF = \langle B \rangle$ where $B$ consists of the first $7$ of these characters by Lemma \ref{l:ind}. Now, $$\det(X_B(\cF)\overline{X_B(\cF)^T})=2^{22} \neq 2^{21} = \prod_{s \in \cF^z} |C_S(s)|. $$
\end{exmp}

\section{Proof of Theorem \ref{t:groupcase}}\label{s:groupcase}

We follow the approach taken in \cite{L24}, pointing out some overlap with that taken by Olsson in \cite{O82}. Let $G$ be a finite group, fix a (not necessarily Sylow) $p$-subgroup $S$ of $G$  and let $\cF=\cF_S(G)$ be the associated fusion system. Let $\Ch(S)=\ZZ[\Irr(S)]$ be the ring of virtual $S$-characters and $\Ch(S)^G:=\Ch(S)^{\cF_S(G)}$ be the subring consisting of those which are $G$-stable. Let $n=|S^G|$ be the number of $G$-classes of elements in $S$ and set $m=|\Irr(G)|$. Note that $\rk_\ZZ(\Ch(S)^G)=n$ by the discussion in Section \ref{s:fstable}. \medskip

Fix a basis $B$ of $\Ch(S)^G$ and let $\cF^z$ be a set of fully $\cF$-centralised $\cF$-conjugacy class representatives of $S$. Thus if $\rho$ is an $\cF$-stable character of $S$, we may write $$\rho=\sum_{\psi \in B} \lambda_{\rho,\psi} \psi$$ for unique  $\lambda_{\rho,\psi} \in \ZZ$ depending on $B$. Let $D=D_B(G) \in M_{m \times n}(\ZZ)$ and $X=X_B(G) \in M_n(\CC)$ be the matrices given for $\chi \in \Irr(G),\psi \in B$  and $s \in \cF^z$ by: $$(D_B(G))_{\chi,\psi}:=\lambda_{\chi|_S,\psi}  \mbox{ and } (X_B(G))_{\psi, s}:=\psi(s).$$ Since $(DX)_{\chi,s} = \displaystyle\sum_{\psi \in B} \lambda_{\chi|_S,\psi} \psi(s)=\chi(s),$ 
 \begin{equation}\label{e:main}
 (\overline{(DX)^T} DX)_{s,t} = \sum_{\chi \in \Irr(G)} \overline{\chi(s)}\chi(t) =\delta_{st}|C_G(s)| 
 \end{equation} by column orthogonality. Thus writing $C:=\overline{D^T}D$,  we have $$\det(\overline{(DX)^T}DX) = \det(C)\cdot \det(X\overline {X^T}),$$ and so \begin{equation}\label{e:2}
   \det(X\overline {X^T})=\frac{1}{\det(C)}\prod_{s \in \cF^z} |C_G(s)|.
\end{equation}

\begin{rem}\label{r:integral}
As was first noted in \cite[Cor. 3.18]{L24}, since $\det(C) \in \ZZ$, (\ref{e:2}) implies that $\det(X\overline {X^T})$ is a rational algebraic integer, and hence an integer. In fact,  one can moreover prove that $\det(X\overline{X^T}) \not\equiv 0 \pmod \ell$ for all primes $\ell \neq p$ by considering the restrictions to $S$ of irreducible $\ell$-Brauer characters of $G$, whence $\det(X\overline{X^T})$ is in fact a $p$-power (see \cite[Section 4]{L24}). 
\end{rem}

We assume for the remainder of this section that $S$ is a Sylow $p$-subgroup of $G$. The next result is essentially \cite[Folg. 6.9]{O82}; here we provide a self-contained proof.

\begin{prop}\label{p:groupcase}
If $S$ is a Sylow $p$-subgroup of $G$ then $(\det(C),p)=1$. Consequently,  Theorem \ref{t:groupcase} holds.
\end{prop}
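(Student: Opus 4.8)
We need to prove $(|C|, p) = 1$ where $C = \overline{D^T}D$, and $D = D_B(G)$ records the restriction-to-$S$ of $\Irr(G)$ expressed in a basis $B$ of the $G$-stable characters. Then equation (\ref{e:2}) immediately gives $|X\overline{X^T}| = \frac{1}{|C|}\prod_{s\in\cF^z}|C_G(s)|$, and since $S$ is Sylow, the $p$-part of $|C_G(s)|$ equals $|C_S(s)|$ for fully centralised representatives. So the content is really the coprimality claim.

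**Strategy via volumes.** The natural approach is a lattice/volume argument using Lemma \ref{l:latticeresult}. The matrix $D$ is the transition matrix expressing the lattice $\Lambda := \ZZ[\chi|_S : \chi\in\Irr(G)]$ (which by Proposition \ref{prop:brauer} equals $\Ch(S)^G$) in terms of the basis $B$. If $\Irr(G)$ restricted to $S$ were $\ZZ$-independent and spanned $\Ch(S)^G$, then $D$ would be square and $|C| = |\det D|^2$. In general $m \geq n$, so $D$ is $m\times n$ of full rank $n$, and $|C| = \det(\overline{D^T}D)$ is the squared covolume (Gram determinant) of the columns of $D$ — equivalently the squared volume of $\Lambda$ measured in the $B$-coordinates. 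Since $B$ is itself a basis of $\Ch(S)^G = \Lambda$, this covolume should be $1$.

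**The clean argument.** Here is what I would write. Fix the basis $B$; by Proposition \ref{prop:brauer}, $\Ch(S)^G = \Lambda$ is spanned over $\ZZ$ by $\{\chi|_S : \chi\in\Irr(G)\}$. Consider the $\ZZ$-linear map $\phi:\ZZ^m \to \Ch(S)^G$ sending the standard basis vector $e_\chi$ to $\chi|_S$. In coordinates relative to $B$, $\phi$ is given by the matrix $D$. Since the $\chi|_S$ generate $\Ch(S)^G = \langle B\rangle$, the map $\phi$ is surjective, so $D:\ZZ^m \to \ZZ^n$ is a surjection of free $\ZZ$-modules. A surjective $\ZZ$-linear map $\ZZ^m \twoheadrightarrow \ZZ^n$ has, by Smith normal form, all its invariant factors equal to $1$; hence the $n\times n$ minors of $D$ have gcd $1$. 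By the Cauchy–Binet formula, $\det(\overline{D^T}D) = \sum_J |\det D_J|^2$ summed over $n$-element row subsets $J$, and since these minors are coprime this sum is a positive integer not divisible by... — wait, coprimality of the minors does not by itself force $|C|$ coprime to $p$. Let me reconsider: I expect the authors instead use that $\phi$ is surjective to pick out $n$ rows forming a unimodular square submatrix, but that is false for non-square surjections.

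**Where the real obstacle lies.** The genuine difficulty — and where $S$ being Sylow must enter — is exactly this coprimality of $|C|$ to $p$, which is \emph{not} a formal consequence of surjectivity. I expect the proof to reduce modulo $p$: one shows $\overline{D^T}D$ is invertible over $\FF_p$. The Sylow hypothesis enters through the fact that restriction $\Irr(G)\to \Ch(S)^G$ captures enough of the Brauer characters / $p$-modular data that reduction mod $p$ stays nondegenerate — concretely, one likely uses that the decomposition matrix of $G$ at $p$, or the orthogonality relations restricted to $p$-regular classes and $p$-elements, give a nondegenerate pairing mod $p$. I anticipate an argument identifying $C \pmod p$ with a Gram matrix of the projective/Brauer characters, whose determinant is prime to $p$ by a theorem on Cartan determinants (the Cartan matrix of $\FF_p G$ has determinant a power of $p$, but the relevant combination is a $p'$-number). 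The hard part will be pinning down precisely which orthogonality or block-theoretic identity makes $\det C$ a $p'$-integer — I would look for an interpretation of $C$ as counting $p$-regular data or as a product involving $|N_G(S)|/|S|$-type indices, exploiting that $[G:S]$ is coprime to $p$ so that the $p$-part of each $|C_G(s)|$ is realised inside $S$.
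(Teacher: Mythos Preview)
Your proposal never actually proves $(|C|,p)=1$. You correctly abandon the Cauchy--Binet/lattice approach --- surjectivity of $D:\ZZ^m\twoheadrightarrow\ZZ^n$ only forces the $\gcd$ of the $n\times n$ minors to be $1$, which says nothing about the sum of their squares modulo $p$ --- but what follows is pure speculation: no argument via decomposition matrices, Cartan determinants, or reduction mod~$p$ is ever carried out. In fact that modular direction is a red herring; the paper's argument stays entirely within ordinary character theory.

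The missing idea is to compute $C^{-1}$ explicitly and show that a $p'$-multiple of it is integral. From $\overline{X^T}CX=\Delta$ one reads off $C^{-1}=X\Delta^{-1}\overline{X^T}$, i.e.
\[
(C^{-1})_{\psi,\phi}=\sum_{s\in\cF^z}\frac{\psi(s)\overline{\phi(s)}}{|C_G(s)|}.
\]
For each $\psi\in B$ the paper extends $\psi$ to the $G$-class function $\psi_0$ defined by $\psi_0(g)=|G|_{p'}\psi(s)$ if $g$ is $G$-conjugate to some $s\in S$, and $\psi_0(g)=0$ otherwise. Brauer's characterisation of characters then shows $\psi_0\in\ZZ[\Irr(G)]$: for an elementary subgroup $E=O_p(E)\times O_{p'}(E)$ one may conjugate so that $O_p(E)\le S$ (this is exactly where $S\in\Syl_p(G)$ is used), and then $\langle\psi_0|_E,\rho\rangle=\frac{|G|_{p'}}{|O_{p'}(E)|}\langle\psi|_{O_p(E)},\rho|_{O_p(E)}\rangle\in\ZZ$ for every $\rho\in\Irr(E)$. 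A direct computation now gives
\[
|G|_{p'}^2\,(C^{-1})_{\psi,\phi}=\langle\psi_0,\phi_0\rangle_G\in\ZZ,
\]
so $|G|_{p'}^2 C^{-1}\in M_n(\ZZ)$, whence $|C|$ divides $|G|_{p'}^{2n}$ and $(|C|,p)=1$.
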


\begin{proof}
Writing $\Delta$ for the diagonal matrix $\Delta_{s,t} =\delta_{s,t}|C_G(s)|$ for $s,t \in \cF^z$, \eqref{e:main} implies $$\overline{X^T} C X = \overline{X^T}\overline{D^T}D X = \overline{(DX)^T}DX = \Delta$$ so that $C^{-1}=X \Delta^{-1}\overline{X^T}$ and explicitly for $\psi,\phi \in B$ we have, $$(C^{-1})_{\psi,\phi} = \sum_{s \in \cF^z} \frac{\psi(s)\overline{\phi(s)}}{|C_G(s)|}.$$ For $\psi \in B$ write $\psi_0$ for the $G$-class function given by $$\psi_0(g):=\begin{cases} |G|_{p'}\psi(s) & \mbox{ if $g$ is $G$-conjugate to $s \in S$ } \\ 0 & \mbox{ otherwise.} \end{cases}. $$ Then $\psi_0 \in \ZZ[\Irr(G)]$ by Brauer's characterisation of characters. Indeed, if $E$ is $p$-elementary then $E=O_p(E) \times O_{p'}(E)$ and conjugating $\psi_0$ if necessary (using that $\psi$ is $G$-stable) we may assume that $O_p(E) \le S$ since $S$ is a Sylow $p$-subgroup of $G$. Then for any $\rho \in \Irr(E)$, $$\langle \psi_0|_E,\rho \rangle = \frac{1}{|E|}\sum_{g \in E} \psi_0(g)\overline{\rho(g)} = \frac{1}{|E|}\sum_{g \in O_p(E)}  |G|_{p'}\psi(g)\overline{\rho(g)} = \frac{|G|_{p'}}{|O_{p'}(E)|}\langle \psi|_{O_p(E)},\rho|_{O_p(E)} \rangle \in \ZZ.$$ Calculating, we obtain: $$|G|_{p'}^2 (C^{-1})_{\psi,\phi} = |G|_{p'}^2 \sum_{s \in \cF^z} \frac{\psi(s)\overline{\phi}(s)}{|C_G(s)|} = \sum_{s \in \cF^z} \frac{\psi_0(s)\overline{\phi_0(s)}}{|C_G(s)|} = \langle \psi_0,\phi_0 \rangle \in \ZZ.$$ Hence $|G_{p'}|^2C^{-1} \in M_n(\ZZ)$ from which it follows that $(\det(C),p)=1$. 
Finally, for $\cF=\cF_S(G)$, $C_S(s) \in \Syl_p(C_G(s))$ if and only if $s$ is fully $\cF$-centralised. Thus by \eqref{e:2} we have $$\det(X\overline{X^T})_p=\prod_{s \in \cF^z} |C_G(s)|_p =\prod_{s \in \cF^z } |C_S(s)|,$$ as required.
\end{proof}

\subsection{Characters in the principal block}\label{s:groupcaseprinc}

There is an analogue $D_0$ of $D$ for the principal $p$-block $B_0$ of $G$ given by restricting its rows to $\Irr(B_0)$. Let $X_B(G)$ be as above and let $D_0=D_B(B_0)$ be given for $\chi \in \Irr(B_0)$, and $\psi \in B$ by $(D_B(B_0))_{\chi,\psi}:=\lambda_{\chi|_S,\psi}$ as before. Set $C_0=\overline{D_0^T}D_0$. Using orthogonality of characters in $B_0$, we show the following:

\begin{prop}\label{olssonblock}
 For $D_0=D_B(B_0)$ as above, we have:
 \begin{equation}\label{e:main}
 (\overline{(D_0X)^T} D_0X)_{s,t} = \delta_{st}\dim(B_0(s))
 \end{equation} 
 where $B_0(s)$ is the principal $p$-block of $C_G(s)$. Consequently $(\det(C_0),p)=1$.
\end{prop}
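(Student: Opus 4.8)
The plan is to mirror the proof of Proposition \ref{p:groupcase}, truncating the full character table to its principal-block part and upgrading the global column-orthogonality relation \eqref{e:main} to its block-theoretic refinement. First I would note that, exactly as for $DX$, one has $(D_0X)_{\chi,s}=\sum_{\psi\in B}\lambda_{\chi|_S,\psi}\psi(s)=\chi(s)$ for every $\chi\in\Irr(B_0)$ and $s\in\cF^z$, so that
$$\bigl(\overline{(D_0X)^T}D_0X\bigr)_{s,t}=\sum_{\chi\in\Irr(B_0)}\overline{\chi(s)}\,\chi(t).$$
Thus the asserted identity amounts to evaluating this ``partial'' column inner product, taken over the principal block only.

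For the off-diagonal entries, $s\neq t$ in $\cF^z$ are non-conjugate $p$-elements of $G$, and I would invoke the block orthogonality relations: for any $p$-block $B$ and any $g,h\in G$ whose $p$-parts are not $G$-conjugate one has $\sum_{\chi\in\Irr(B)}\chi(g)\overline{\chi(h)}=0$ (see Navarro or Nagao--Tsushima). Applied to $B=B_0$ and the $p$-elements $s,t$ this gives the required vanishing away from the diagonal.

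The diagonal entry is the heart of the matter. Writing $H=C_G(s)$, I would expand each value through Brauer's generalized decomposition numbers, $\chi(s)=\sum_{\phi\in\operatorname{IBr}(H)}d^{\,s}_{\chi\phi}\phi(1)$. By Brauer's Second Main Theorem, for $\chi\in\Irr(B_0)$ the coefficient $d^{\,s}_{\chi\phi}$ vanishes unless $\phi$ lies in a block $b$ of $H$ with $b^G=B_0$; moreover the orthogonality relations for generalized decomposition numbers give $\sum_{\chi\in\Irr(B_0)}\overline{d^{\,s}_{\chi\phi}}\,d^{\,s}_{\chi\phi'}=c_{\phi\phi'}$, the Cartan invariants of $H$, for such $\phi,\phi'$ (the remaining $\chi\in\Irr(G)$ contribute nothing, again by the Second Main Theorem). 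Feeding this in and using the block-diagonal shape of the Cartan matrix together with $\sum_{\phi,\phi'}\phi(1)\phi'(1)c_{\phi\phi'}=\sum_{\theta\in\Irr(b)}\theta(1)^2=\dim b$ inside each block, I obtain $\sum_{\chi\in\Irr(B_0)}|\chi(s)|^2=\sum_{b:\,b^G=B_0}\dim b$. The step I expect to be delicate is showing that this sum runs over the single block $b=B_0(H)$: this is exactly where I would use Brauer's Third Main Theorem, applicable because $\langle s\rangle$ is a central $p$-subgroup of $H=C_G(s)$, which forces the only block of $H$ inducing to $B_0$ to be the principal one. This yields $\sum_{\chi\in\Irr(B_0)}|\chi(s)|^2=\dim B_0(s)$ and completes the displayed identity.

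Finally, for the consequence I would read off from the identity that $\overline{X^T}C_0X=\Delta_0$, where $\Delta_0$ is diagonal with entries $\dim B_0(s)$, whence $\det(C_0)=\prod_{s\in\cF^z}\dim B_0(s)\big/\,|X\overline{X^T}|$. By Proposition \ref{p:groupcase} we already have $|X\overline{X^T}|_p=\prod_{s\in\cF^z}|C_S(s)|$, so everything reduces to checking that the $p$-parts cancel, i.e.\ that $\nu_p(\dim B_0(s))=\nu_p(|C_G(s)|)=\nu_p(|C_S(s)|)$ for each $s$. This says that the principal block of $C_G(s)$, which has full defect, has dimension \emph{exactly} divisible by $|C_G(s)|_p=|C_S(s)|$: divisibility of projective module dimensions by the $p$-part of $|C_G(s)|$ gives the lower bound $\nu_p(\dim B_0(s))\ge \nu_p(|C_G(s)|)$, and I regard establishing the matching upper bound (equivalently, that the identity coefficient of the principal block idempotent is a $p$-adic unit) as the second potential obstacle. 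Granting it, the $p$-parts cancel and $(|C_0|,p)=1$ follows at once.
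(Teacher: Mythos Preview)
Your proposal is correct and follows essentially the same route as the paper. The only notable difference is one of packaging: the paper simply cites the identity $\sum_{\chi\in\Irr(B_0)}\overline{\chi(s)}\chi(t)=\delta_{st}\dim B_0(s)$ as a known ``refinement of block orthogonality'' (referring to \cite{KMS4}), whereas you derive it from first principles via generalized decomposition numbers and Brauer's Second and Third Main Theorems---a perfectly standard derivation. For the consequence $(|C_0|,p)=1$, both you and the paper reduce to the equality $\nu_p(\dim B_0(s))=\nu_p(|C_G(s)|)$; the paper asserts this without comment, while you correctly flag the upper bound as a point needing justification. It is indeed a standard fact (for any block $B$ with defect group $D$ one has $\nu_p(\dim B)=2\nu_p(|G|)-\nu_p(|D|)$, so full defect gives exactly $\nu_p(|G|)$), so your caution is warranted but there is no genuine obstacle.
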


\begin{proof}
Since $(D_0X)_{\chi,s} = \displaystyle\sum_{\psi \in B} \lambda_{\chi|_S,\psi} \psi(s)=\chi(s),$

$$(\overline{(D_0X)^T} D_0X)_{s,t} = \sum_{\chi \in \Irr(B_0)} \overline{\chi(s)}\chi(t) =\delta_{st}\dim(B_0(s))$$
by a refinement of block orthogonality (see, e.g., \cite[Thm. 2.1]{KMS4}). 
Now,  $$\frac{1}{\det(C)}\prod_{s \in \cF^z} |C_G(s)|= \det(X\overline {X^T}) = \frac{1}{\det(C_0)}\prod_{s \in \cF^z} \dim(B_0(s)).$$ Since $(\det(C),p)=1$ by Proposition \ref{p:groupcase} and since $v_\ell(\dim(B_0(s)))=v_\ell(|C_G(s))|)$, for each $s \in \cF^z$ we must also have $(\det(C_0),p)=1$.
\end{proof}

\section{Proof of Theorem \ref{t:main}}
For an odd prime $p$, we first introduce notation to describe fusion systems on a Sylow $p$-subgroup of $\PSp_4(p)$. 
We then prove Theorem \ref{t:main} via a case-by-case check.
\subsection{Simple exotic fusion systems on $p$-groups of order $p^4$}\label{s:sfs}

 Let $$V:=\{f(x,y) \in \FF_p[x,y] \mid \deg(f)=2 \} = \langle \bv_1, \bv_2, \bv_3 \rangle, \mbox{ for } \bv_1=x^2, \hspace{2mm} \bv_2=xy \mbox{ and }\bv_3=y^2,$$ and regard $V$ as an $(\FF_p^\times \times \GL_2(p))$-module via the action $$f(x,y) \cdot \left( \lambda,\left(\begin{matrix} a & b \\ c & d \end{matrix}\right)\right) = \lambda \cdot f(ax+by, cx+dy).$$   Then setting $U:=\langle \bu \rangle$ for $\bu=\left(\begin{smallmatrix} 1 & 0 \\ 1 & 1\end{smallmatrix}\right)$, define $S:= V \rtimes U$ so that $S$ is isomorphic to a Sylow $p$-subgroup of $\PSp_4(p)$.  Let $\bz:=\bv_1$ (so that $\langle \bz \rangle = Z(S)$) and $E:=\langle \bz, \bu \rangle \cong C_p \times C_p$. Let $\chi: \FF_p^\times \rightarrow \{\pm 1\}$ be the unique non-trivial homomorphism, define subgroups of $\FF_p^\times \times \GL_2(p)$ $$H:=\{(\det(A)^{-1},A) \mid A \in \GL_2(p)\}\mbox{ and } H^*:=\{(\chi(\det(A))\cdot \det(A)^{-1},A) \mid A \in \GL_2(p)\}   $$ and observe that neither of these acts faithfully on $V$ (scalar matrices act trivially) and so their images $\overline{H}$ and $\overline{H^*}$  in $\GL(V)$ are isomorphic to $\PGL_2(p)$. Choosing a primitive element $\lambda \in \FF_p^\times$ for $e \mid p-1$, define subgroups of $\GL(V)$: $$\Gamma_{(e)}:= \overline{H(\langle \lambda^e \rangle \times \langle I_2 \rangle)} \mbox{ and } \Gamma^*_{(e)}:= \overline{H^*(\langle \lambda^e \rangle \times \langle I_2 \rangle)}.$$ In particular $\Gamma:=\Gamma_{(1)} = \Gamma_{(1)}^* \cong (C_{p-1} \times \PGL_2(p))$ and we may form $N:=V \rtimes \Gamma$. By \cite[Thm. 4.1]{COS17}, there is a fusion system $$\cF_{(1)}:=\langle \cN,\cH \rangle_S, \mbox{ where }\cN=\cF_S(N) \hspace{2mm} \mbox{ and } \hspace{2mm} \cH=\cF_{N_S(E)}(Y)$$ with $Y \cong (C_p \times C_p) \rtimes \GL_2(p)$ (in the notation of \cite{COS17}, $\E_0 = \cH_0$ is equal to $E^\cF$). Moreover, from the analysis in the proof of \cite[Thm. 7.1]{M18}, there are simple subsystems $\cF_{(2)}, \cF_{(4)}^*$ of $\cF_{(1)}$ such that: $$O^{p'}(\cF_{(1)}) = \begin{cases} \cF_{(2)} & \mbox{ if $p \equiv 3 \pmod 4$} \\ \cF_{(4)}^* & \mbox{ if $p \equiv 1 \pmod 4$} \end{cases}$$ where $\cF_{(2)}$ and $\cF_{(4)}^*$ are defined by the properties that: $$N_{\cF_{(2)}}(V)=\cF_S(V \rtimes \Gamma_{(2)}) \mbox{ and } N_{\cF_{(4)}^*}(V)=\cF_S(V \rtimes \Gamma_{(4)}^*).$$ It is furthermore clear from the above description that the only $N_{O^{p'}(\cF_{(1)})}(V)$-classes fused in $O^{p'}(\cF_{(1)})$ are those represented by $\bz$ and $\bu$. 
  
There is an additional simple subsystem $\cG$ of $\cF_{(1)}$ obtained by \textit{pruning} $V$ (see \cite{PS21}). Explicitly, set $\bb:=\left(b, \left(\begin{smallmatrix} 1 & 0 \\ 0 & b  \end{smallmatrix}\right) \right)$ for some (any) primitive $b \in \FF_p^\times$ and note that $\bb$ acts on $V=\langle \bv_1,\bv_2, \bv_3 \rangle$ via the matrix 
$\left(\begin{smallmatrix} b & 0 & 0 \\ 0 & b^2 & 0 \\ 0 & 0 & b^3 \end{smallmatrix}\right)$. Let $N:=S:\langle \bb \rangle$. By \cite[Lem. 6.4]{PS21}, there is a fusion system $$\cG:=\langle \cN, \cH \rangle_S, \mbox{ where } \cN=\cF_S(N) \hspace{2mm} \mbox{ and } \hspace{2mm}  \cH=\cF_{N_S(E)}(Y)$$ with $Y \cong E \rtimes \SL_2(p)$  Again, the only pair of $\cN$-classes fused in $\cG$ are those represented by $\bz$ and $\bu$. With these descriptions, the following is proven in \cite{M18}.

\begin{thm}\label{t:p4class}
Suppose $p \ge 7$, and $\cF$ is a simple exotic saturated fusion system a $p$-group of order $p^4$. Then either $\cF \cong O^{p'}(\cF_{(1)})$ or $\cF \cong \cG$.
\end{thm}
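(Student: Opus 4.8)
The statement to be proved is Theorem~\ref{t:p4class}, the classification of simple exotic saturated fusion systems on $p$-groups of order $p^4$ for $p \ge 7$, asserting that any such $\cF$ is isomorphic to either $O^{p'}(\cF_{(1)})$ or $\cG$. Since this is a classification result quoted from \cite{M18}, the natural strategy is not to reprove it from scratch but to organise the argument around the structure already imposed by the hypotheses. The plan is to first fix the underlying $p$-group. By the Ruiz--Viruel analysis cited in the introduction, the case $|S|=p^3$ is handled separately, so here we genuinely work with $|S|=p^4$. The key input is that a simple fusion system has no nontrivial normal subgroups, which forces strong constraints on $S$; in particular one expects $S$ to be forced (up to isomorphism) to be the Sylow $p$-subgroup of $\PSp_4(p)$ defined in Section~\ref{s:sfs}, namely $S = V \rtimes U$.

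\textbf{Key steps.} First I would enumerate the isomorphism types of groups $S$ of order $p^4$ admitting a simple saturated fusion system, appealing to the essential-rank and abelian-subgroup-of-index-$p$ structure: since $V \cong C_p^3$ is abelian of index $p$ in $S$, the candidate $p$-groups are severely restricted, and one shows the only viable $S$ is the one described. Second, for such $S$, I would identify the $\cF$-essential subgroups and their automizers. The subgroup $V$ is characteristic abelian and must be $\cF$-essential with $\Aut_\cF(V)$ containing a copy of $\PGL_2(p)$ or a subgroup thereof acting on $V \cong C_p^3$ via the symmetric-square representation; the subgroup $E \cong C_p \times C_p$ provides the second essential subgroup, carrying an $\SL_2(p)$- or $\GL_2(p)$-type automizer as recorded in the definitions of $\cH$. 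Third, I would invoke saturation together with Alperin's fusion theorem to conclude that $\cF$ is generated by $N_\cF(V)$ and $N_\cF(E)$, reducing the classification to determining the possible automizers $\Aut_\cF(V)$ and $\Aut_\cF(E)$ compatible with saturation and with simplicity (no proper nontrivial normal subsystem).

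\textbf{The main obstacle.} The crux is pinning down exactly which subgroups $\Gamma \le \GL(V)$ and which extensions at $E$ yield a \emph{saturated} and \emph{simple} system, and then matching these against the explicit families $\Gamma_{(e)}, \Gamma^*_{(e)}$ and the pruned system $\cG$ of Section~\ref{s:sfs}. Simplicity eliminates the reducible and imprimitive choices of automizer, while saturation constrains how $\Aut_\cF(V)$ and $\Aut_\cF(E)$ must glue along $N_S(E)$; the delicate point is that for general $p$ there could be several candidate automizers, but the hypothesis $p \ge 7$ removes the small-prime coincidences (the exceptional cases at $p=3,5$ alluded to in the introduction), leaving precisely $O^{p'}(\cF_{(1)})$ and $\cG$. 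I expect the heaviest work to lie in verifying that no further essential subgroups arise and that the $O^{p'}$-reduction is correct, which is exactly where the detailed computations of \cite{M18} are needed; accordingly I would cite that analysis for the final enumeration rather than reproduce it.
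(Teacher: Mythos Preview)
The paper's own proof of this theorem is a one-line citation: ``See \cite[Thm.~7.1 and Tables~7.1, 7.2]{M18}.'' No independent argument is given; the classification is imported wholesale from Moragues Moncho's thesis. Since your proposal ultimately defers to \cite{M18} for the actual enumeration, you land in exactly the same place, just with an expository preamble the paper omits.

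One inaccuracy in your sketch is worth flagging: you assert that $V$ ``must be $\cF$-essential,'' but this fails for $\cG$. As recorded in Section~\ref{s:sfs}, $\cG$ is obtained from $\cF_{(1)}$ by \emph{pruning} $V$; concretely $\Aut_\cG(V)$ is generated by $\bu$ and the cyclic $\bb$ of order $p-1$, so $\Out_\cG(V)\cong C_p\rtimes C_{p-1}$ has a normal Sylow $p$-subgroup and hence no strongly $p$-embedded subgroup. Thus $E$ is the only essential subgroup of $\cG$, and your second key step as written would not recover it. This does not damage the overall proposal, since you hand the case analysis to \cite{M18} anyway, but the outline should allow for the possibility that $V$ is not essential.
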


\begin{proof}
See \cite[Thm. 7.1 and Tables 7.1, 7.2]{M18}.
\end{proof}

When $p \le 5$ and $\cF$ is a fusion system as in Theorem \ref{t:p4class}, further exampes arise in addition to $O^{p'}(\cF_{(1)})$ and $\cG$. These have all been calculated using  the \texttt{MAGMA} \cite{BCP97}  package \texttt{FusionSystems} developed jointly by the second author and Parker (see \cite{PS21}) and are listed explicitly in \cite[Tables 1-4]{PS21}.

\begin{proof}[Proof of Theorem \ref{t:main}]
Suppose first that $p \ge 7$ so that either $S$ is isomorphic to a Sylow $p$-subgroup of $\PSp_4(p)$ and $\cF \cong O^{p'}(\cF_{(1)})$ or $\cF \cong \cG$. Set $\cN:=N_\cF(V)$ and let $N$ be such that $\cN=\cF_S(N)$. Then $\bk(\cN)=\bk(\cF)+1$, and $\bu$ is fully $\cN$-centralised with $|C_S(\bu)|=p^2$ and $\bz^\cF=\bu^\cF$ in $\cF$. Since Conjecture \ref{c:main} holds for $\cN$ by Theorem \ref{t:groupcase}, and since $B_\cN:=\{\chi|_S \mid \chi \in \Irr(N)\}$ is a basis for $\Ch(S)^\cN$ to apply Proposition \ref{p:lattice}, it suffices to show that $N$ has an irreducible character $\eta$ with $\eta(\bu)-\eta(\bz)=\pm p$. When $\cF=\cG$, one may take, for example, $\eta=\chi_{i,j}|_S$ with $i \neq 0$, for $\chi_{i,j}$ as defined in Notation \ref{n:g}. When $\cF=O^{p'}(\cF_{(1)})$ one may take $\eta=\theta_{p-1}|_S$ defined in Notation \ref{n:f1}. Thus to complete the proof of Theorem \ref{t:main}, we may assume that $p \le 5$ and it remains to verify Conjecture \ref{c:main} for the simple fusion systems listed in \cite[Tables 1-4]{PS21}. As in \cite{PS21}, we specify the $j$'th fusion system on \texttt{SmallGroup}$(p^n,i)$ using the notation $\cF(p^n,i,j)$.  \medskip\newline\textit{Case (a): $\cF$ is $\cF(3^4,7,2)$, $\cF(3^4,8,1)$, $\cF(3^4,9,3)$ or $\cF(5^4,7,i)$ for some $1 \le i \le 10$.}  \medskip\newline Here we apply the same strategy as carried out previously.  That is, for a well-chosen subgroup $V \le S$ of index $p$, we obtains a basis for $B_\cN$ with $\cN=N_\cF(V)$ and show that there exist $\bu, \bz, \eta$ and a basis $B_\cN$ of $\Ch(S)^\cN$  which satisfies the hypotheses of Proposition \ref{p:lattice}. For the fusion systems $\cF(3^4,7,2)$, $\cF(3^4,8,1)$ and $\cF(3^4,9,3)$, this calculation was completed entirely on \texttt{MAGMA} \cite{BCP97} and we omit the details. For the fusion systems $\cF(5^4,7,i)$ with $1 \le i \le 10$, we adopt Notation \ref{n:f1} and observe first that there are containments: $$\cF(\PSU_5(4)) = \cF(5^4,7,1) \supset \cF(5^4,7,2)  \supset \cF(5^4,7,4)  \supset \cF(5^4,7,6) \supset \cF(5^4,7,9) = \cF_{(4)}^*.$$ Setting $\zeta = \frac{-1+\sqrt{5}}{2}$ and $\overline{\zeta} = \frac{-1-\sqrt{5}}{2}$ we label the characters of $\cN_{(4)}^*$ when $p=5$ as in Table \ref{t:n4chartabp=5}, and construct stable sets of characters for these fusion systems as follows: $$\begin{array}{rl}
\cF(5^4,7,9): & \{1_S,\sigma_*+\sigma_*',\chi_1,\chi_2,\chi_3,\chi_4, \chi_0+\theta_4,\chi_0+\sigma_*\} \\
\cF(5^4,7,6): & \{1_S,\sigma_*+\sigma_*',\chi_2,\chi_3,\chi_4, \chi_0+\chi_1+\theta_4,\chi_0+\chi_1+\sigma_*\} \\
\cF(5^4,7,4): & \{1_S,\sigma_*+\sigma_*',\chi_3,\chi_4, \chi_0+\chi_1+\chi_2+\theta_4,\chi_0+\chi_1+\chi_2+\sigma_*\} \\
\cF(5^4,7,2): & \{1_S,\sigma_*+\sigma_*',\chi_4, \chi_0+\chi_1+\chi_2+\chi_3+\theta_4,\chi_0+\chi_1+\chi_2+\chi_3+\sigma_*\} \\
\end{array}$$ of sizes $10,9,8,7$. We now iteratively apply Proposition \ref{p:lattice} with $$\eta = \chi_0, \hspace{2mm} \chi_0+\chi_1,\hspace{2mm} \chi_0+\chi_1+\chi_2, \hspace{2mm} \chi_0+\chi_1+\chi_2+\chi_3$$ to see that Conjecture \ref{c:main} holds for these examples. We similarly have containments: $$\cF(5^4,7,10) \supset \cF(5^4,7,8) \supset \cF(5^4,7,7) \supset \cF(5^4,7,5) \supset \cF(5^4,7,3)=\cG.$$ Conjecture \ref{c:main} is established via an identical approach in these cases and so we omit the details. \medskip\newline \textit{Case (b): $p=3$ and $\cF$ is $\cF(3^4,9,2)$.}\medskip\newline Here we are unable to apply Proposition \ref{p:lattice} because three $N_\cF(S)$-classes are fused in $\cF$; instead, we give a direct argument. $N_\cF(S)$ is realised by a group $N$ of order $162$, and Table \ref{t:nchartabspecal} lists the $S$-restricted irreducible characters $N$ where $\omega$ is a primitive cube root of unity and $$\alpha=2\sqrt{3}\cos(\pi/18), \hspace{2mm} \beta=2\sqrt{3}\cos(13\pi/18),  \hspace{2mm} \gamma=2\sqrt{3}\cos(25\pi/18)$$ are the three distinct roots of $x^3-9x-9$. Since $\bz$, $\bu$, $\bu'$ are fused in $\cF$, it is easy to see that any indecomposable $\cF$-stable character must be of the form $\rho+\psi$ for $\rho \in \{\chi_1,\chi_2, \chi_3+\chi_4,\chi_5,\chi_6,\chi_7\}$ and $\psi \in \{\chi_8,\chi_9,\chi_{10}\}$. Thus, for example, the set $$B_\cF=\{\chi_1,\chi_5+\chi_8,\chi_2+\chi_8,\chi_6+\chi_8,\chi_7+\chi_8,\chi_7+\chi_9,\chi_7+\chi_{10},\chi_3+\chi_4+\chi_8\}$$ is a basis for $\Ch(S)^\cF$ by Lemma \ref{l:ind}. The result now follows by direct calculation.
\end{proof}

\begin{table}
\tiny{

\renewcommand{\arraystretch}{1.6}
\centering
\caption{$S$-restricted irreducible characters of $N_{(4)}^*$ when $p=5$}
\label{t:n4chartabp=5}
\begin{tabular}{|c|c|c|c|c|c|c|c|c|c|c|c|}
\hline
$\chi$ & $1$ & $\bv_1$ & $\bv_2$ & $\lambda \bv_2$ & $\bv_1+\epsilon \bv_3$ & $\lambda(\bv_1+\epsilon \bv_3)$ & $\bu$ & $\bu \bv_2$ & $\bu (\bv_2)^2$ & $\bu (\bv_2)^3$ & $\bu (\bv_2)^4$\\ \hline
$1_S$ &  $1$ &  $1$ &  $1$ &  $1$ &  $1$ &  $1$ & $1$ & $1$ & $1$ &$1$ & $1$ \\

$\theta_4$ &  $4$ &  $4$ &  $4$ &  $4$ &  $4$ &  $4$ & $-1$ & $-1$ & $-1$ & $-1$ & $-1$ \\

$\chi_0:=\chi(\psi_{1,0,0})$ &  $24$ &  $-1$ &  $4$ & $4$ &  $-6$ & $-6$ &  $4$ &  $-1$ & $-1$ & $-1$ & $-1$ \\

$\chi_1:=\chi_a(\psi_{1,0,0},\rho_{4})$ &  $24$ &  $-1$ &  $4$ & $4$ &  $-6$ &  $-6$ &  $-1$ & $4$ & $-1$ & $-1$ & $-1$ \\

$\chi_2:=\chi_b(\psi_{1,0,0},\rho_4)$ &  $24$ &  $-1$ &  $4$ & $4$ &  $-6$ &  $-6$ &  $-1$ & $-1$ & $4$ & $-1$ & $-1$ \\

$\chi_3:=\chi_c(\psi_{1,0,0},\rho_4)$ &  $24$ &  $-1$ &  $4$ & $4$ &  $-6$ &  $-6$ &  $-1$ & $-1$ & $-1$ & $4$ & $-1$ \\

$\chi_4:=\chi_d(\psi_{1,0,0},\rho_4)$ &  $24$ &  $-1$ &  $4$ & $4$ &  $-6$ &  $-6$ &  $-1$ & $-1$ & $-1$ & $-1$ & $4$ \\

$\sigma_1':=\chi_a(\psi_{1,0,\epsilon})$ &  $20$ &  $-5$ &  $0$ & $0$ &  $-5\zeta$ &  $-5\overline{\zeta}$ &  $0$ & $0$ & $0$ & $0$ & $0$\\

$\sigma_2':=\chi_b(\psi_{1,0,\epsilon})$ &  $20$ &  $-5$ &  $0$ & $0$ &  $-5\overline{\zeta}$ &  $-5\zeta$ &  $0$ & $0$ & $0$ & $0$ & $0$\\

$\sigma_1:=\chi_a(\psi_{0,1,0})$ &  $30$ &  $5$ &  $5\zeta$ & $5\overline{\zeta}$ &  $0$ &  $0$ &  $0$ & $0$ & $0$ & $0$ & $0$\\

$\sigma_2:=\chi_b(\psi_{0,1,0})$ &  $30$ &  $5$ &  $5\overline{\zeta}$ & $5\zeta$ &  $0$ &  $0$ &  $0$ & $0$ & $0$ & $0$ & $0$\\ \hline

\end{tabular}
}
\end{table}

\begin{table}
\tiny{

\renewcommand{\arraystretch}{1.6}
\centering
\caption{$S$-restricted irreducible characters of $N$ when $\cF=\cF(3^4,9,2)$}
\label{t:nchartabspecal}
\begin{tabular}{|c|c|c|c|c|c|c|c|c|c|c|}
\hline
$\chi$ & $g_1$ & $g_2=\bz$ & $g_3$ & $g_4$ & $g_5$ & $g_6=\bu$ & $g_7=\bu'$ & $g_8$ & $g_9$ & $g_{10}$ \\ \hline
$\chi_1$ & $1$ & $1$ & $1$ & $1$ & $1$ & $1$ & $1$ & $1$ & $1$ & $1$ \\ 
$\chi_2$ & $2$ & $2$ & $2$ & $2$ & $2$ & $-1$ & $-1$ & $-1$ & $-1$ & $-1$ \\

$\chi_3$ & $2$ & $2$ & $2$ & $2$ & $-1$ & $2$ & $-1$ & $-1$ & $-1$ & $-1$ \\

$\chi_4$ & $2$ & $2$ & $2$ & $2$ & $-1$ & $-1$ & $2$ & $-1$ & $-1$ & $-1$ \\

$\chi_5$ & $2$ & $2$ & $2$ & $2$ & $-1$ & $-1$ & $-1$ & $2$ & $2$ & $2$ \\ 

$\chi_6$ & $3$ & $3$ & $3\omega$ & $3\overline{\omega}$ & $0$ & $0$ & $0$ & $0$ & $0$ & $0$ \\

$\chi_7$ & $3$ & $3$ & $3\overline{\omega}$ & $3\omega$ & $0$ & $0$ & $0$ & $0$ & $0$ & $0$ \\

$\chi_8$ & $6$ & $-3$ & $0$ & $0$ & $0$ & $0$ & $0$ & $\alpha$ & $\beta$ & $\gamma$ \\

$\chi_9$ & $6$ & $-3$ & $0$ & $0$ & $0$ & $0$ & $0$ & $\beta$ & $\gamma$ & $\alpha$ \\

$\chi_{10}$ & $6$ & $-3$ & $0$ & $0$ & $0$ & $0$ & $0$ & $\gamma$ & $\alpha$ & $\beta$ \\ \hline
\end{tabular}
}
\end{table}

\section{A possible approach to Conjecture \ref{c:main}}\label{s:possapproach}

Here we provide partial progress on an approach to proving Conjecture \ref{c:main} using the characteristic idempotent defined in \cite{R06}. We are inspired by the use of this idempotent in the proof of Park's result \cite{P16} and its connection with column orthogonality (see Proposition \ref{p:colorth}). 

\begin{defn}
Let $\cF$ be a saturated fusion system on a $p$-group $S$. We say that a function $\chi: S \times S \rightarrow \ZZ_{(p)}$ is \textit{$\cF$-good} if for $g',h'\in S$ and $g,h \in \cF^z$ with $g' \in g^\cF$ and $h' \in h^\cF$, $$\chi(g',h')_p=\delta_{gh}|C_S(g)|.$$
\end{defn} An affirmative answer to the following question would imply Conjecture \ref{c:main}.

\begin{question}\label{q:approach}
For any $\cF$-stable basis $B$ of size $n$ with character table $X=X_B(\cF)$, does there exist a natural number $m$ and matrix $D=D_B(\cF) \in M_{mn}(\ZZ_{(p)})$ with $(\det(D^TD),p)=1$ such that the function  $\chi: S \times S \rightarrow \ZZ_{(p)}$ given for $g',h' \in S$ and $g,h \in \cF^z$ with $g' \in g^\cF$ and $h' \in h^\cF$ by $$\chi(g',h') :=  (\overline{(DX)^T}DX)_{g,h}$$ is $\cF$-good?
\end{question}

Let us refer to such a matrix $D$ as an \textit{orthogonalising matrix for $\cF$}. The results in Section \ref{s:groupcase} show that any realisable fusion system $\cF=\cF_S(G)$ with $S \in \Syl_p(G)$ has an orthogonalising matrix $D=D_B(G)$; or else one may take $D=D_B(B_0)$ where $B_0$ is the principal $p$-block of $G$. Such matrices exist partly because column orthogonality holds for $G$, and this can be re-expressed as follows:

\begin{prop}\label{p:colorth}
Let $G$ be a finite group. The permutation character $\rho$ associated to the action of $G \times G$ on $G$ given by $(g,h) \cdot x = gxh^{-1}$ is exactly the function $G \times G \rightarrow \CC$ given by $$ \sum_{\chi \in \Irr(G)} \chi(g) \overline{\chi(h)} \mbox{ for } (g,h) \in G \times G.$$ 
\end{prop}

An analogue of Proposition \ref{p:colorth} for saturated fusion systems is provided by the characteristic idempotent $\omega_\cF$ in the double Burnside ring of $\cF$, first constructed in \cite{R06}. Indeed, as shown by Reeh in \cite{R16}, the permutation character of $S \times S$ associated to $\omega_\cF$ has the following remarkable property:

\begin{prop}\label{p:charidemp}
Let $\cF$ be a saturated fusion system on $S$ and let $\chi: S \times S \rightarrow \ZZ_{(p)}$ be the $\ZZ_{(p)}$-localisation of the permutation character of the characteristic idempotent $\omega_\cF$. Then for all $g,h \in S$, $$\chi((g,h))=\begin{cases} |S|/|\Hom_\cF(\langle g \rangle,S)| & \mbox{ if $h \in g^\cF$; } \\ 0 & \mbox{ otherwise.} \end{cases}$$
\end{prop} 

\begin{proof}
See \cite[Theorem B]{R16}
\end{proof}

In particular, when $\cF=\cF_S(S)$, we have $\omega_{\cF_S(S)}={_S}S_S$ is the regular $(S,S)$-biset, and  Proposition \ref{p:charidemp} is equivalent to Proposition \ref{p:colorth} for $G=S$. The function $\chi$ in Proposition \ref{p:charidemp} is thus a natural `target function' for an orthogonalising matrix in Question \ref{q:approach}  and we are led to conjecture:
\begin{conj}\label{c:fgood}
The function $\chi: S \times S \rightarrow \ZZ_{(p)}$ in Proposition \ref{p:charidemp} is $\cF$-good.
\end{conj}

That Conjecture \ref{c:fgood} holds when $\cF=\cF_S(G)$ with $S \in \Syl_p(G)$ is a consequence of the following result, the proof of which was pointed out to us by John Shareshian on MathOverflow \cite{S26}.

\begin{prop}\label{p:shareshian}
Let $G$ be a finite group and $p$ be a prime. Let $S$ be a Sylow $p$-subgroup of $G$. Then  for all $x \in S$, $$|x^G|_p = |x^G \cap S|_p.$$
\end{prop}

\begin{proof}
We double count the set $$\cX:=\{(y,Q) \mid Q \in \Syl_p(G) \mbox{ and } y \in x^G \cap Q\}.$$

Since $|x^G \cap Q|=|x^G \cap S|$ for all $Q \in \Syl_p(G)$, we have  $\cX=|\Syl_p(G)| |x^G \cap S|$. As $|\Syl_p(G)| \equiv 1 \bmod p$, we have: $$|\cX|_p = |\Syl_p(G)|_p|x^G \cap S|_p=|x^G \cap S|_p.$$ On the other hand, $$|\cX|=|x^G|\cdot |\{Q \mid Q \in \Syl_p(G) \mbox{ and } x \in Q\}|.$$  Now $\langle x \rangle$ acts by conjugation on the set $$\cY:=\{Q \mid Q \in \Syl_p(G) \mbox{ and } x \notin Q\},$$ and  moreover this action has no fixed points since if $x$ normalizes $Q \in \Syl_p(G)$ then $\langle x \rangle Q$ is a $p$-subgroup of $G$ strictly containing $Q$, a contradiction. Hence every orbit has size divisible by $p$ and so $|\cY|$ is divisible by $p$, whence $|\Syl_p(G) \backslash \cY|$ is not divisible by $p$. Therefore, $$|\cX|_p=|x^G|_p |\Syl_p(G) \backslash \cY|_p = |x^G|_p,$$ as needed.
\end{proof}

\begin{cor}
Conjecture \ref{c:fgood} holds when $\cF=\cF_S(G)$ is the fusion system of a finite group $G$ in which $S$ is a Sylow $p$-subgroup.
\end{cor}

\begin{proof}
It suffices to show that for any fully $\mathcal{F}$-centralised element $x \in S$, we have \begin{equation}\label{e:mo}
|C_S(x)||\Hom_{\mathcal{F}}(\langle x \rangle,S)|_p = |S|.
\end{equation}  Since $C_S(x) \in \Syl_p(C_G(x))$, using Proposition \ref{p:shareshian} we obtain $$|C_S(x)||\text{Hom}_{\mathcal{F}}(\langle x \rangle,S)|_p = |C_G(x)|_p |x^G \cap S|_p = |C_G(x)|_p \cdot |x^G|_p =|G|_p=|S|,$$ completing the proof.
\end{proof}


 
We end with two remarks:

\begin{rem}
For an arbitrary saturated fusion system $\cF$ on $S$ and fully $\cF$-centralised element $x \in S$, the equality (\ref{e:mo}) appears elusive at the time of writing, but note that by \cite[Cor. 7.17]{BD12}, we always have that $|C_S(x)||\text{Hom}_\cF(\langle x \rangle,S)|_p \le |S|.$ 
\end{rem}

\begin{rem} If $\cF$ is the fusion system of the principal block $B_0$ of a spets $\GG(q)$, the article \cite{KMS4} postulates the existence of a set $\Irr(B_0)$ of mutually orthogonal characters of $B_0$ which specialise to $\cF$-stable virtual characters of $S$. If such a set were to exist, one could associate a matrix $D_B(B_0)$ to an $\cF$-stable basis $B$ of $\Ch(S)^\cF$, exactly as in Section \ref{s:groupcaseprinc}, and Question \ref{q:approach} would be answered in the affirmative for some families of exotic fusion systems such as the Benson--Solomon fusion systems associated to $\GG_{24}(q)$ when $\ell =2$ and $q \equiv 1 \pmod 4$ (see \cite[Section 5]{KMS4} for some partial calculations).
\end{rem}

\section{Appendix}
Here we illustrate an application of Theorem \ref{t:main} to the construction of explicit bases of $\Ch(S)^\cF$ when $\cF$ is a fusion system on a $p$-group of order $p^4$. Specifically, we find explicit bases for the fusion systems $\cF=\cG$ and $\cF=\cF_{(1)}$ in Sections \ref{ss:f=g} and \ref{ss:f=f1} respectively. 

\subsection{The case $\cF=\cG$}\label{ss:f=g} Suppose first that $p \ge 5$ and $\cF=\cG$ is as described in Section \ref{s:sfs}. Thus we have $N_\cF(V)=N_\cF(S)=\cF_S(N)$ where $N= V: \langle \bu,\bb\rangle =S:\langle \bb \rangle$. 

\begin{notn}\label{n:g}
We introduce the following notation to describe the characters of $N$.
\begin{itemize}
\item[(a)] Write $\chi_1,\ldots, \chi_{p-1}$ for the linear characters obtained by inflation from $N/S \cong \langle \bb \rangle$.
\item[(b)] Write $\chi_{i,j}:=\psi_{i,j}^N$ where $\{\psi_{i,j}\}$ is a complete set of $\langle \bb \rangle$-orbit representatives of $\Irr(S/[S,S])  \cong \langle \overline{\bu}, \overline{\bv_3} \rangle$ for $i,j \in \FF_p$. If $i \neq 0$ then $\chi_{i,j}$ is irreducible, leading to $p$ distinct characters of degree $p-1$. If $p \equiv 2 \pmod 3$ then $\chi_{0,1}$ is also irreducible of degree $p-1$. If $p \equiv 1 \mod 3$, $\psi_{0,1}, \psi_{0,b}$ and $\psi_{0,b^2}$ are three separate $\langle \bb \rangle$-orbit representatives, each with a stabiliser of order $3$ which yields $9$ further irreducible characters $\chi_{0,s}^t:=(\widehat{\psi_{0,{b^s}}} \otimes \rho_t)^N$ each of degree $(p-1)/3$ for $0 \le s,t \le 2$ where $\rho_t \in \Irr(I_{\langle \bb \rangle}(\psi_{0,b^s}))$ and $\widehat{\psi}$ denotes the extension of $\psi$ to $V \rtimes I_{\langle \bb \rangle}(\psi)$.
\item[(c)] For $i,j,k \in \FF_p$, write $\chi_{i,j,k}:=\psi_{i,j,k}^N$ where $\{\psi_{i,j,k}\}$ is a complete set of $\langle \bb \rangle$-orbit representatives of degree $p$ characters of $\Irr(S)$ and $\psi_{i,j,k}=\rho_{i,j,k}^S$ for some $\rho_{i,j,k} \in \Irr(V)$. Since $\psi_{0,1,0}$ and $\psi_{0,b,0}$ both have $\langle \bb\rangle$-stabiliser of order $2$, we obtain  $4$ irreducible characters $\chi_{0,s,0}^t:=(\widehat{\psi_{0,b^s,0}} \otimes \rho_t)^N$ each of degree $p(p-1)/2$ for $0 \le s,t \le 1$ and $\rho_t \in \Irr(I_{\langle \bb \rangle}(\psi_{0,b^s,0}))$.
\end{itemize}
\end{notn}

\begin{table}
\tiny{

\renewcommand{\arraystretch}{1.6}
\centering
\caption{$S$-restricted irreducible characters of $N$ when $\cF=\cG$}
\label{t:indnf1}
\begin{tabular}{|c|c|c|c|c|c|c|}
\hline
$\chi$ & conditions & $\#\{\chi\}$ & $\#\{\chi|_S\}$ & $\chi(1)$ & $\chi(\bu)$ & $\chi(\bz)$ \\ \hline
$\chi_i$ & $1 \le i \le p-1$ & $p-1$ & $1$ & $1$ & $1$ & $1$ \\
$\chi_{i,j}$ & $0 \le i,j \le p-1$, $i \neq 0$ & $p$ & $p$ & $p-1$ & $-1$ & $p-1$ \\ 
$\chi_{0,s}^t$ & $p \equiv 1 \pmod 3$, $0 \le s,t \le 2$ & $9$ & $3$  & $(p-1)/3$ & $(p-1)/3$ & $(p-1)/3$ \\
$\chi_{0,1}$ & $p \equiv 2 \pmod 3$ & $1$ & $1$ & $p-1$ & $p-1$ & $p-1$ \\
$\chi_{i,j,k}$ & $(i,j) \neq (0,0)$ & $p$ & $p$ & $p(p-1)$ & $0$ & $-p$ \\
$\chi_{0,s,0}^t$ & $0 \le s,t \le 1$ & $4$ & $2$ & $p(p-1)/2$ & $0$ & $p(p-1)/2$ \\ \hline
\end{tabular}
}
\end{table}

\begin{lem}
When $\cF=\cG$, the characters listed in Table \ref{t:indnf1} form a basis $B_\cN$ for $\Ch(S)^\cN$, and the values on $\bu$ and $\bz$ are as given.
\end{lem}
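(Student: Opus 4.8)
The plan is to verify that the characters in Table~\ref{t:indnf1} constitute a complete, $\ZZ$-linearly independent list of the indecomposable $\cN$-stable characters, and then separately to confirm the tabulated values on $\bu$ and $\bz$. Since $N = V \rtimes \langle \bu, \bb\rangle$ has normal Sylow $p$-subgroup $S$, Lemma~\ref{l:indfnormal} applies: the indecomposable $\cN$-stable characters are precisely the restrictions $\chi|_S$ for $\chi \in \Irr(N)$, and by Lemma~\ref{l:ind} these span $\Ch(S)^\cN$. So the first task is to enumerate $\Irr(N)$ and identify which restrictions coincide, producing the column $\#\{\chi|_S\}$. I would organise this according to the three families in Notation~\ref{n:f1}, corresponding to the restriction of an irreducible of $N$ to $S$ having an irreducible constituent that is linear (degree $1$), of degree $p-1$, or of degree $p$ on $V\rtimes U = S$.

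\textbf{Enumerating the characters.} For the families I would work through the standard Clifford-theoretic machinery for the extension $1 \to S \to N \to \langle \bb\rangle \to 1$ with $\langle\bb\rangle$ cyclic of order $p-1$. Part~(a) gives the $p-1$ inflations from $N/S$, each restricting to the trivial character of $S$, accounting for the single column class $\chi_i|_S = 1_S$. For parts~(b) and~(c) the characters are induced from $\langle\bb\rangle$-orbit representatives of $\Irr(S/[S,S])$ and of the degree-$p$ characters of $\Irr(S)$ respectively; here the key bookkeeping is computing the $\langle\bb\rangle$-orbit sizes and the resulting stabilisers, which is exactly where the congruence conditions $p \equiv 1,2 \pmod 3$ (for the degree-$(p-1)$ characters fixed by the order-$3$ subgroup) and the order-$2$ stabiliser split (for the $\chi_{0,s,0}^t$) enter. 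I would confirm the degrees $\chi(1)$ by Frobenius reciprocity / the induction formula and check that the total count $\sum \#\{\chi\}$ equals $|\Irr(N)| = \bk(\cN)$, and that $\sum \#\{\chi|_S\}$ matches $\rk_\ZZ(\Ch(S)^\cN) = \bk(\cN)$, which by Lemma~\ref{l:indfnormal} equals the number of distinct restrictions. The linear independence of the listed restrictions over $\ZZ$ then follows since distinct irreducibles of $N$ with distinct restrictions to $S$ remain independent (the fibres of $\chi \mapsto \chi|_S$ being accounted for in the $\#\{\chi|_S\}$ column), so the list is a basis.

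\textbf{Character values on $\bu$ and $\bz$.} Having pinned down the characters abstractly, I would compute $\chi(\bu)$ and $\chi(\bz)$ using the explicit induction formula $\chi_{\cdots}^N(g) = \sum_{t} \widehat{\psi}(t g t^{-1})$ summed over coset representatives $t$ for which $tgt^{-1}$ lies in the inducing subgroup. Since $\bu, \bz \in E = \langle \bz,\bu\rangle$ and $\bz$ is central in $S$, the values are governed by how the inducing characters of $V$ (or $S$) evaluate on $\bz$ and $\bu$ and how the $\langle\bb\rangle$-action (acting on $V$ via the diagonal matrix $\mathrm{diag}(b,b^2,b^3)$) permutes the relevant linear characters. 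For the degree-$p$ family induced from $\Irr(V)$, the value at $\bu$ vanishes because $\bu \notin V$ and no conjugate lands in $V$, giving $\chi_{i,j,k}(\bu)=0$, while $\chi_{i,j,k}(\bz) = -p$ follows from summing a nontrivial character of $\langle\bz\rangle$ over the orbit. I expect the main obstacle to be the careful tracking in part~(c), and in the split cases $\chi_{0,s}^t$ and $\chi_{0,s,0}^t$, of the interaction between the extension $\widehat{\psi}$ to $V \rtimes I_{\langle\bb\rangle}(\psi)$ and the tensoring by $\rho_t$: one must verify that $\rho_t(\bu)=\rho_t(\bz)$ effectively (these act trivially on $E$ in the relevant sense) so that the tabulated equal values $\chi(\bu)=\chi(\bz)$ for the stabilised families are correct, and that the degree/value arithmetic is internally consistent. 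The remaining entries for $\chi_i$ and $\chi_{i,j}$ are direct, the latter from the fact that $\psi_{i,j}$ is a character of $S/[S,S]$ evaluated at the images $\overline{\bu},\overline{\bz}$, and I would simply record these values.
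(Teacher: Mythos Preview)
Your proposal is correct and follows essentially the same route as the paper: invoke Lemma~\ref{l:indfnormal} to identify $B_\cN$ with the set of distinct restrictions $\chi|_S$ for $\chi\in\Irr(N)$, confirm completeness of the list in Notation~\ref{n:f1} (the paper does this by checking that the squares of the degrees sum to $|N|$ rather than by a raw count), and then evaluate at $\bu$ and $\bz$ via the explicit induction formula. One small slip to correct: you write $|\Irr(N)|=\bk(\cN)$, but these are not equal since $N$ has conjugacy classes outside $S$; only the equality $\sum\#\{\chi|_S\}=\bk(\cN)$ holds, and that is the count you actually need for the basis claim.
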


\begin{proof}
Note first that since $S \unlhd N$, $\Ind(\cN)=\{\chi|_S \mid \chi \in \Irr(N)\}$ is a basis for $\Ch(S)^\cN$ by Lemma \ref{l:indfnormal}. The characters described in Notation \ref{n:g} are distinct and the squares of their degrees sum to $|N|$. It thus suffices to determine the values of these characters on $\bu$ and $\bz$. The values of $\chi_i$ are clear.  Plainly $\chi_{i,j}(\bz)=\psi_{i,j}^N(\bz)=p-1$   and $$\chi_{i,j}(\bu)=\psi_{i,j}^N(\bu)=\sum_{b \in \langle \bb \rangle} \psi_{i,j}(\bu^b)=\begin{cases} \displaystyle\sum_{i=1}^{p-1} \zeta^i=-1 & \mbox{ if $i \neq 0$ }\\ \displaystyle\sum_{i=1}^{p-1} 1=p-1  & \mbox{ if $i = 0$ }\\ \end{cases},$$ where $\zeta$ is a $p$'th root of unity. Similar arguments give the stated values for $\chi_{0,s}^t$ for $0 \le s,t \leq 2$ when $p \equiv 1 \pmod 3$. If $(i,j) \neq (0,0)$ then $\chi_{i,j,k}(\bu)= \psi_{i,j,k}^N(\bu)= 0$ since $\psi_{i,j,k}(\bu)=0$ and $$\chi_{i,j,k}(\bz)=\sum_{b \in \langle \bb \rangle } \psi_{i,j,k}(\bz^b) = \displaystyle\sum_{i=1}^{p-1} \zeta^i p = -p.$$ Similarly, $\chi_{0,s,0}^t(\bu)=0$ for $0 \le s,t \le 1$, and we have $$\chi_{0,s,0}^t(\bz)=(\widehat{\psi_{0,b^s,0}} \otimes \rho_t)^N(\bz)= |N/\langle \bb^{(p-1)/2} \rangle|\psi_{0,s,0}(\bz)=p(p-1)/2,$$ since $\psi_{0,s,0}(\bz)=p$.
\end{proof}

\begin{table}
\tiny{

\renewcommand{\arraystretch}{1.6}
\centering
\caption{A basis $B_\cF$ for $\Ch(S)^\cF$ when $\cF=\cG$}
\label{t:fstabf1}
\begin{tabular}{|c|c|c|c|c|}
\hline
$\chi$ & conditions &  $\#\{\chi|_S\}$ & $\chi(1)$ & $\chi(\bz)=\chi(\bu)$ \\ \hline
$\chi_0$  & $-$ & $1$ & $1$ & $1$  \\
$\chi_{i,j}+\chi_1$ & $\chi_{i,j} \neq \chi_2$ & $p-1$ & $p^2-1$ & $-1$  \\ 
$\chi_{i,j,k}+\chi_2$ & $\chi_{i,j,k} \neq \chi_1$  & $p-1$ & $p^2-1$ & $-1$ \\
$\chi_1+\chi_2$ & $-$ & $1$ & $p^2-1$ & $-1$ \\
$\chi_{0,s}^t$ & $p \equiv 1 \pmod 3$ & $3$  & $(p-1)/3$ & $(p-1)/3$  \\
$\chi_{0,1}$ & $p \equiv 2 \pmod 3$ & $1$ & $p-1$ & $p-1$ \\
$\chi_{0,s,0}^t + \frac{p-1}{2}\cdot \chi_1$ & $-$ & $2$ & $p^2(p-1)/2$ & $0$ \\ \hline
\end{tabular}
}
\end{table}

\begin{prop}\label{p:cmain1}
For all $p \ge 5$, the characters listed in Table \ref{t:fstabf1} form a basis for $\Ch(S)^\cF$.
\end{prop}

\begin{proof}
Let $\chi_1$ be any of the characters $\chi_{i,j,k}$ with $(i,j) \neq (0,0)$ and $\chi_2$ be any of the characters $\chi_{i,j}$ with $i \neq 0$. Using Table \ref{t:indnf1}, it is elementary to observe that Table \ref{t:fstabf1} lists a set $B_\cF$ of $|S^\cF|=|S^\cN|-1$ linearly independent  $\cF$-stable characters, each of which is a linear combination of $B_\cN$. The claim now follows from Theorem \ref{t:main}.
\end{proof}

\subsection{{The case $\cF=\cF_{(1)}$}}\label{ss:f=f1} Now suppose that $p \ge 3$ and $\cF=\cF_{(1)}$ is as in Section \ref{s:sfs}. In particular $N=V \rtimes \Gamma$ with $\Gamma:=\Gamma_{(1)} \cong C_{p-1} \times \PGL_2(p)$. We first determine the orbits of $\Gamma$ on $V$. Fix $\epsilon \in \FF_p^\times$ such that $\sqrt{-\epsilon} \notin \FF_p^\times$. 
 
\begin{lem}\label{l:orbsgamma}
There are exactly three non-trivial orbits of  $\Gamma$ on $V$ represented by $x^2$, $xy$, and $x^2+\epsilon y^2$ with $\Gamma$-stabilisers  isomorphic with $C_p \rtimes C_{p-1}$, $D_{2(p-1)}$ and $D_{2(p+1)}$ respectively. Consequently, $S^{\cF}=\{1,\bv_1,\bv_2,\bv_1+\epsilon \bv_3, \bu, \bu \bv_1\}$.
\end{lem}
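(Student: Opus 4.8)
The plan is to prove Lemma~\ref{l:orbsgamma} by directly analysing the action of $\Gamma = \Gamma_{(1)} \cong C_{p-1} \times \PGL_2(p)$ on the three-dimensional space $V$ of binary quadratic forms. First I would exploit the fact that the $\GL_2(p)$-factor acts on $V$ exactly as $\GL_2(p)$ acts on binary quadratic forms by linear substitution of variables, while the central $C_{p-1}$-factor scales the whole form. The classical theory of binary quadratic forms over $\FF_p$ tells us that a nonzero form $f(x,y)$ is classified up to $\GL_2(p)$-equivalence by whether it is a perfect square ($f \sim x^2$, i.e.\ rank $1$), a product of two distinct $\FF_p$-rational linear factors ($f \sim xy$, split), or an irreducible form ($f \sim x^2 + \epsilon y^2$ with $-\epsilon$ a non-square, anisotropic). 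Adding the scalar action of $C_{p-1}$ can only merge orbits within each of these three types, and one checks it does not merge the three types with each other (rank and splitting behaviour are preserved by scaling). This gives exactly three non-trivial orbits with the claimed representatives.

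Next I would compute the stabilisers. The cleanest route is the orbit--stabiliser relation $|\Gamma| = |\text{orbit}| \cdot |\text{stab}|$, using $|\Gamma| = (p-1)\cdot |\PGL_2(p)| = (p-1)\cdot p(p^2-1)$. For the orbit of $x^2$: a form of rank $1$ is determined by its radical (a point in $\PP^1(\FF_p)$) up to a nonzero scalar, so the orbit has size $(p+1)\cdot(p-1)/\gcd$-type count; I would verify the orbit has size $p(p+1)$ (equivalently $|\mathrm{stab}| = p-1$), consistent with $C_p \rtimes C_{p-1}$. For $xy$ the stabiliser in $\PGL_2(p)$ of an unordered pair of distinct points of $\PP^1$ is a dihedral group of order $2(p-1)$, and combining with the scaling factor one recovers $D_{2(p-1)}$; similarly an anisotropic form is stabilised by the norm-one torus giving $D_{2(p+1)}$. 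The main bookkeeping obstacle here is keeping track of how the central $C_{p-1}$ scaling interacts with the $\PGL_2(p)$-stabiliser---in particular confirming that the scalars needed to fix a given representative are already accounted for, so that the stabiliser orders come out as stated rather than off by a factor; I would resolve this by writing each stabiliser explicitly as a subgroup of $\FF_p^\times \times \GL_2(p)$ modulo the scalar matrices (which act trivially on $V$) and counting directly.

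Finally, I would deduce the description of $S^{\cF}$. Since $\cF = \cF_{(1)} = \langle \cN, \cH\rangle_S$ with $\cN = \cF_S(N)$, the $\cF$-classes of elements of $S$ refine to the $\cN$-classes, and the essential subgroup $E$ together with $\cH$ is what further fuses elements. The $V$-part of $S^{\cF}$ is governed by the $\Gamma$-orbits just computed, contributing $1, \bv_1, \bv_2, \bv_1 + \epsilon\bv_3$ (trivial orbit plus the three non-trivial representatives). The remaining classes come from elements outside $V$: using that $\cH = \cF_{N_S(E)}(Y)$ with $Y \cong (C_p \times C_p)\rtimes \GL_2(p)$ fuses $\bz$ and $\bu$ (as noted in Section~\ref{s:sfs}), and that $N_S(E)$-fusion together with the coset structure of $S/V$ leaves $\bu$ and $\bu\bv_1$ as the two surviving representatives of elements with nontrivial image in $U$. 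I expect the stabiliser computation to be the genuinely substantive step, whereas both the three-orbit classification and the final assembly of $S^{\cF}$ are essentially bookkeeping, reading off $\cN$-classes and applying the fusion data already recorded in Section~\ref{s:sfs}.
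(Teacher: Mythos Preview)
Your approach is essentially the same as the paper's: both classify nonzero binary quadratic forms over $\FF_p$ by factorisation type (square of a linear form, product of two distinct rational linear forms, irreducible) to obtain the three orbits, and both then determine stabilisers and assemble the class representatives. The paper differs only in that it writes down explicit generators for each stabiliser inside $\Gamma$ rather than arguing via orbit--stabiliser, and for the last part it cites \cite[Not.~2.4]{COS17} to enumerate the $N$-classes outside $V$.

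One arithmetic slip to fix: for the orbit of $x^2$ you write that the orbit has size $p(p+1)$ and $|\mathrm{stab}|=p-1$, which is internally inconsistent (and neither number is correct). The rank-$1$ forms number $p^2-1$ (a nonzero linear form up to scalar gives $p+1$ choices, and the residual scalar on the square gives $p-1$), so with $|\Gamma|=p(p-1)^2(p+1)$ the stabiliser has order $p(p-1)$, matching $|C_p\rtimes C_{p-1}|$. Your descriptions for the $xy$ and $x^2+\epsilon y^2$ stabilisers are correct in outline; the paper's explicit generators make the dihedral structure transparent and avoid the bookkeeping you flag about how the $C_{p-1}$ scaling interacts with the $\PGL_2(p)$-stabiliser.
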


\begin{proof}
Any non-zero $2$-homogeneous polynomial $f \in V$ is either irreducible, separable or the square of a linear polynomial. An elementary counting argument shows that the set of all polynomials of such a type is respectively a $\Gamma$-orbit of $V$ of size $p(p-1)^2/2$, $p(p^2-1)/2$ or $p^2-1$. To compute the stabilisers, let $\lambda \in \FF_p^\times$ be primitive and $\mu=a+b u$ be a primitive element of $\FF_p[u]/(u^2+\epsilon) \cong \FF_{p^2}$. Thus $\N(\mu)=\mu^{p+1} = a^2+b^2 \epsilon \in \FF_p^\times$. Observe that:

$$\begin{array}{rcl}

\stab_\Gamma(x^2+\epsilon y^2) & \ge &  \overline{\left\langle \left(-1,\left(\begin{matrix} 0 & 1 \\ 1 & 0 \end{matrix} \right) \right), \left( \N(\mu)^{-1},\left(\begin{matrix} a & b\epsilon \\ -b & a \end{matrix} \right) \right)   \right\rangle}   \cong D_{2(p+1)}\medskip\\ 

\stab_\Gamma(xy) & \ge &  \overline{\left\langle \left(-1,\left(\begin{matrix} 0 & 1 \\ 1 & 0 \end{matrix} \right) \right), \left(\lambda^{-1},\left(\begin{matrix} \lambda & 0 \\ 0 & 1 \end{matrix} \right) \right)   \right\rangle}   \cong D_{2(p-1)} \medskip\\

\stab_\Gamma(x^2) & \ge &  \overline{\left\langle \left(d^{-1},\left(\begin{matrix} 1 & 0 \\ c & d \end{matrix} \right) \right) \mid c \in \FF_p, d \in \FF_p^\times  \right\rangle}   \cong C_p \rtimes C_{p-1}.
\end{array}$$

The first statement of the lemma now follows. The second statement also follows, together with the fact that there are exactly $p$ $N$-classes of elements in $S \backslash V$ represented by $\bu(\bv_2)^i$ for $0 \le i \le p-1$ which fall into two $\cF$-classes represented by $\bu$ and $\bu \bv_2$ in $V \rtimes \Gamma$ (see \cite[Not. 2.4]{COS17}).
\end{proof}

\begin{notn}\label{n:f1}
We now adopt the following notation to describe the characters of $N$.
\begin{itemize}
\item[(a)] For $k=p-1,p$ or $p+1$ let $\theta_k$ denote the restriction to $S$ of any irreducible representation of $\PGL_2(p)$ of degree $k$.
\item[(b)] Write $\Irr(V)=\{\psi_{i,j,k} \mid 0 \le i,j,k \le p-1\}$.
\item[(c)] For $\psi \in \Irr(V)$ and $\rho \in \Irr(I_\Gamma(V))$ write $\widehat{\psi}$ for its extension to $V.I_\Gamma(\psi)$ and set $\chi(\psi,\rho):=(\widehat{\psi} \otimes \rho)^\Gamma|_S$ and $\chi(\psi):=\chi(\psi,1)$.
\item[(d)] Write $\rho_{p-1} \in \Irr(I_\Gamma(\psi_{1,0,0}))$ for the unique character of degree $p-1$ and $\rho_2$ for any character of degree $2$ in $\Irr(I_\Gamma(\psi_{0,1,0}))$ or $\Irr(I_\Gamma(\psi_{1,0,\epsilon}))$.
\end{itemize}
\end{notn}

The following function will be of assistance when computing characters of $N$.

\begin{lem}\label{l:helper}
Suppose $\psi \in \Irr(V)/\Gamma$, $\rho \in \Irr(I_\Gamma(\psi))$ and $v \in V/\Gamma$. Then $$(\widehat{\psi} \otimes \rho)^\Gamma(v)=\frac{p \cdot \rho(1)}{|I_\Gamma(\psi)|}(n_{v,\psi}-p^2+1))$$ where $n_{v,\psi}$ is exactly as given in Table \ref{t:fpairs}.
\end{lem}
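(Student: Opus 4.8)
The plan is to evaluate the induced character directly from the induction formula and then exploit the fact that $\Gamma$ contains the full group of scalar matrices acting on $V$. Write $H := V \rtimes I_\Gamma(\psi) \le N$, so that the superscript $\Gamma$ denotes induction from $H$ to $N = V \rtimes \Gamma$, and apply the induced character formula to $\phi := \widehat\psi \otimes \rho$ at an element $v \in V$. Since $V \trianglelefteq N$, every $N$-conjugate of $v$ again lies in $V \subseteq H$, so the sum runs over all of $N$; writing $g = w\gamma$ with $w \in V$, $\gamma \in \Gamma$ and using that $V$ is abelian gives $g^{-1}vg = \gamma^{-1}(v)$. Because $\rho$ is inflated from $I_\Gamma(\psi)$ it is trivial on $V$, so $\phi(\gamma^{-1}(v)) = \rho(1)\,\psi(\gamma^{-1}(v))$, and the sum over $w$ contributes a factor $|V|$ which cancels against the index. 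After reindexing $\gamma \mapsto \gamma^{-1}$ this yields
\[
(\widehat\psi \otimes \rho)^\Gamma(v) = \frac{\rho(1)}{|I_\Gamma(\psi)|} \sum_{\gamma \in \Gamma} \psi(\gamma(v)).
\]

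Next I would use the decomposition $\Gamma = \FF_p^\times \times \overline{H}$ from Section \ref{s:sfs}, where $\FF_p^\times$ acts on $V$ by scalar multiplication and $\overline{H} \cong \PGL_2(p)$ meets the scalars trivially. Splitting $\gamma = s \cdot h$ and carrying out the inner scalar sum first, for each fixed $w = h(v)$ the sum $\sum_{s \in \FF_p^\times} \psi(sw)$ is a sum of $p-1$ values of a non-trivial additive character of $\FF_p$ evaluated along the line $\FF_p w$; it equals $p-1$ when $w \in \ker\psi$ and $-1$ otherwise, i.e. $p \cdot [\,h(v) \in \ker\psi\,] - 1$. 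Summing over $h \in \overline{H}$ then gives
\[
\sum_{\gamma \in \Gamma} \psi(\gamma(v)) = p\, n_{v,\psi} - |\PGL_2(p)|, \qquad n_{v,\psi} := \#\{ h \in \overline{H} : h(v) \in \ker\psi \}.
\]
Since $|\PGL_2(p)| = p(p^2-1)$, factoring out $p$ produces exactly $\tfrac{p \cdot \rho(1)}{|I_\Gamma(\psi)|}(n_{v,\psi} - p^2 + 1)$, which is the claimed formula and at the same time identifies $n_{v,\psi}$ as the number of elements of $\PGL_2(p)$ carrying $v$ into the hyperplane $\ker\psi$.

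Finally, to match the entries of Table \ref{t:fpairs}, I would compute each count $n_{v,\psi}$ for the relevant pairs $(v,\psi)$ using the orbit and stabiliser data from Lemma \ref{l:orbsgamma}. Since $h(v)$ ranges over the $\overline{H}$-orbit of $v$ with fibres of size $|\stab_{\overline{H}}(v)|$, one has $n_{v,\psi} = |\stab_{\overline{H}}(v)| \cdot |\,(\overline{H} \cdot v) \cap \ker\psi\,|$, reducing everything to counting orbit points lying on a fixed hyperplane. I expect this last geometric count, over the three non-trivial $\PGL_2(p)$-orbits (corresponding to irreducible, separable, and square quadratic forms), to be the main labour: it requires pinning down precisely how each orbit meets the various hyperplanes $\ker\psi$, whereas the preceding character-theoretic reduction is entirely routine.
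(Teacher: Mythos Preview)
Your derivation of the formula matches the paper's almost exactly: both evaluate the induced character on $V$, split $\Gamma$ as (scalars)\,$\times\,\PGL_2(p)$, sum over the scalars first to obtain $(p-1)$ or $-1$ according as $g(v)\in\ker\psi$ or not, and arrive at the stated expression with $n_{v,\psi}$ counting those $g\in\PGL_2(p)$ sending $v$ into $\ker\psi$.

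Where you diverge slightly is in the plan for the table values. The paper records the condition $g(v)\in\ker\psi$ via an explicit polynomial $f_{v,\psi}(a,b,c,d)$ in the matrix entries (the first entry in each cell of Table~\ref{t:fpairs}) and obtains $n_{v,\psi}$ by counting its zeros directly; your orbit--stabiliser rewriting $n_{v,\psi}=|\stab_{\overline H}(v)|\cdot|(\overline H\!\cdot v)\cap\ker\psi|$ is an equally legitimate route. Two small caveats: first, Lemma~\ref{l:orbsgamma} records $\Gamma$-stabilisers rather than $\overline H$-stabilisers, so you would need to pass to $\stab_\Gamma(v)\cap\overline H$ (and note that the $\overline H$-orbits need not coincide with the $\Gamma$-orbits); second, the case $(v,\psi)=(\bv_1+\epsilon\bv_3,\psi_{1,0,\epsilon})$ is genuinely harder than the rest --- the paper identifies $f_{v,\psi}$ with a smooth quadric in $\PP^3\FF_p$ and invokes \cite{P51} for its point count, so expect the corresponding hyperplane-intersection count in your approach to require comparable effort.
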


\begin{proof}
We have, $$(\widehat{\psi} \otimes \rho)^\Gamma(v)=\frac{\rho(1)}{|I_\Gamma(\psi)|} \sum_{g \in \Gamma} \psi(g^{-1}vg) = \frac{\rho(1)}{|I_\Gamma(\psi)|}\sum_{g \in \PGL_2(p)} \sum_{\lambda \in \FF_p^\times} \omega^{\lambda f_{v,\psi}(g)},$$ where $\omega$ is a primitive $p$'th root of unity and $f_{v,\psi}: \PGL_2(p) \rightarrow \FF_p$ is one of the functions listed in Table \ref{t:fpairs}.    Writing $n_{v,\psi}=|\{g \in \PGL_2(p) \mid f_{v,\psi}(g) = 0 \} |$ and separating the sum, this becomes $$\frac{\rho(1)}{|I_\Gamma(\psi)|}\left((p-1)n_{v,\psi}-(|\PGL_2(p)|-n_{v,\psi})\right)$$ exactly as in the statement of the lemma. The values of $n_{v,\psi}$ in Table \ref{t:fpairs} are determined by elementary counting in almost all cases. One exception is the case $v=\bv_1+\epsilon \bv_3$ and $\psi= \psi_{1,0,\epsilon}$ where $f_{v,\psi}$ is a non-degenerate smooth projective quadric on $\PP^3 \FF_p$. Any point on this quadric must arise from an element of $\PGL_2(p)$. Indeed, if $ad=bc$ and $a \neq 0$ then $$0=(a^2+\epsilon b^2)(a^2+\epsilon c^2),$$ a contradiction (the case $a=0$  is dealt with similarly). Thus the assertion that $n_{v,\psi}=(p+1)^2$ follows from the main results of \cite{P51}.
\end{proof}

\begin{table}
\small{
\renewcommand{\arraystretch}{1.6}
\centering
\caption{Pairs $f_{v,\psi}(g)$, $n_{v,\psi}$ for $g = \overline{\left(\begin{smallmatrix} a& b \\ c & d \end{smallmatrix}\right)} \in \PGL_2(p)$ }
\label{t:fpairs}
\begin{tabular}{|c|c|c|c|}
\hline
$v/\psi$ & $\psi_{1,0,0}$ & $\psi_{0,1,0}$ & $\psi_{1,0,\epsilon}$  \\ \hline 

$\bv_1$ & $a^2$, $p(p-1)$ & $ac$, $2p(p-1)$ & $a^2+\epsilon c^2$, $0$ \\

$\bv_2$ & $2ab$, $2p(p-1)$ & $ad+bc$, $(p-1)^2$ & $2(ab + \epsilon cd)$, $p^2-1$ \\ 

$\bv_1+ \epsilon \bv_3$ & $a^2+\epsilon b^2$, $0$ & $ac+\epsilon bd$, $p^2-1$ & $a^2+\epsilon b^2+ \epsilon c^2+ \epsilon^2 d^2$, $(p+1)^2$ \\ \hline
\end{tabular}
}

\end{table}

\begin{lem}\label{l:n(1)basis}
When $\cF=\cF_{(1)}$, the first $6$ characters listed in Table \ref{t:nchartab} form a basis $B_\cN$ for $\Ch(S)^\cN$, and the values are as presented there.
\end{lem}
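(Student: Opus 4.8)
The plan is to verify Lemma \ref{l:n(1)basis} by confirming that the six characters listed form a $\ZZ$-basis of $\Ch(S)^\cN$ and then computing their values on the six $\cF$-class representatives. Since $S \unlhd N = V \rtimes \Gamma$ (as $V$ is characteristic in $S$ and $\Gamma$ normalises $S$), Lemma \ref{l:indfnormal} tells us that $\Ind(\cN) = \{\chi|_S \mid \chi \in \Irr(N)\}$ is already a basis for $\Ch(S)^\cN$, and has size $\bk(\cN)$. So the first task is to check that the characters in the table genuinely constitute all restrictions $\chi|_S$ arising from $\Irr(N)$ (up to the coincidences of restriction recorded in the $\#\{\chi|_S\}$ column), which reduces to a standard application of Clifford theory over the normal subgroup $V$: each $\chi \in \Irr(N)$ is induced from an extension $\widehat{\psi} \otimes \rho$ of some $\psi \in \Irr(V)$ to its inertia group $V.I_\Gamma(\psi)$, and the orbit–stabiliser data for the $\Gamma$-action on $\Irr(V) \cong V$ is exactly that computed in Lemma \ref{l:orbsgamma}. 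I would organise this by running through the three non-trivial $\Gamma$-orbits on $\Irr(V)$ together with the trivial character, read off the inertia groups $I_\Gamma(\psi)$ from the stabiliser descriptions, and enumerate the irreducible constituents via the representation theory of those (small, explicitly described) inertia quotients.

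The second and main task is the character-value computation on $\bu$ and $\bz$, for which the engine is Lemma \ref{l:helper}: for each induced character $\chi(\psi,\rho) = (\widehat{\psi}\otimes\rho)^\Gamma|_S$ and each class representative $v \in S^\cF$, the value is determined by the combinatorial quantity $n_{v,\psi}$ tabulated in Table \ref{t:fpairs}. Concretely, I would substitute the relevant $|I_\Gamma(\psi)|$, $\rho(1)$ and $n_{v,\psi}$ into the formula $\tfrac{p\cdot\rho(1)}{|I_\Gamma(\psi)|}(n_{v,\psi}-p^2+1)$ for the three non-trivial values of $v$, and separately handle the characters $\theta_k$ inflated/restricted from $\PGL_2(p)$ (for which the values on the $p$-elements $\bu,\bz$ come directly from the character theory of $\PGL_2(p)$, since $\bu,\bz$ lie in the Sylow $p$-subgroup). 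The entries at $v=1$ are just the degrees $\chi(1)$, obtained as $|\Gamma:I_\Gamma(\psi)|\cdot\psi(1)\cdot\rho(1) = |\Gamma:I_\Gamma(\psi)|\cdot\rho(1)$ since $\psi$ is linear. Linear independence of the six rows then follows either from the fact that distinct orbits yield distinct restrictions, or more robustly from the nonvanishing of the relevant determinant, but in practice it is automatic once we know these are precisely the distinct elements of $\Ind(\cN)$.

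The step I expect to be the genuine obstacle is bookkeeping the inertia groups and their small irreducible constituents correctly across the cases, together with the one subtle value-count in Lemma \ref{l:helper} — namely the entry $n_{v,\psi}=(p+1)^2$ for $v=\bv_1+\epsilon\bv_3$, $\psi=\psi_{1,0,\epsilon}$, where $f_{v,\psi}$ is a smooth projective quadric in $\PP^3\FF_p$ and the count relies on the external input \cite{P51}. Once Table \ref{t:fpairs} is accepted, the remaining arithmetic is mechanical, but care is needed that the extensions $\widehat\psi$ exist (guaranteed since $V$ is abelian and the relevant $I_\Gamma(\psi)/V$ quotients are the cyclic or dihedral groups from Lemma \ref{l:orbsgamma}, over which the linear character $\psi$ extends) and that multiplicities of restriction to $S$ are tracked accurately. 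My plan is therefore to present the orbit/inertia data as a short table, invoke Lemma \ref{l:helper} uniformly for the three induced families and the character theory of $\PGL_2(p)$ for the $\theta_k$, and verify the six resulting rows match Table \ref{t:nchartab}, concluding the basis claim via Lemma \ref{l:indfnormal}.
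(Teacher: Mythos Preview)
Your central structural claim---that $S \unlhd N$ so Lemma~\ref{l:indfnormal} applies---is false. Here $N = V \rtimes \Gamma$ with $\Gamma \cong C_{p-1}\times\PGL_2(p)$, and $S = V \rtimes U$ with $U=\langle\bu\rangle$ the unipotent subgroup of $\PGL_2(p)$. For $g\in\Gamma$ one has $gSg^{-1}=V\cdot gUg^{-1}$, and since $V\cap\Gamma=1$ this equals $S$ only when $g\in N_\Gamma(U)=C_{p-1}\times B$ (the Borel), a proper subgroup of $\Gamma$. So $S$ is Sylow but not normal in $N$, and Lemma~\ref{l:indfnormal} is unavailable. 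The paper instead uses Proposition~\ref{prop:brauer}: the restrictions $\{\chi|_S:\chi\in\Irr(N)\}$ \emph{span} $\Ch(S)^\cN$ over $\ZZ$, and then one observes explicitly (from the displayed relations $\theta_p=\theta_{p-1}+1_S$, $\theta_{p+1}=\theta_{p-1}+2\cdot 1_S$, $\chi(\psi,\rho_2)=2\chi(\psi)$) that the last four rows of Table~\ref{t:nchartab} are $\ZZ$-combinations of the first six. Together with $\bk(\cN)=6$ this forces the first six to be a basis. Your plan can be repaired by swapping in this argument, but as written the appeal to Lemma~\ref{l:indfnormal} is a genuine error, not just a shortcut.

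There is a second, smaller gap: Lemma~\ref{l:helper} only evaluates $(\widehat\psi\otimes\rho)^\Gamma$ at elements $v\in V$, so your plan covers the columns $1,\bv_1,\bv_2,\bv_1+\epsilon\bv_3$ but says nothing about $\bu$ and $\bu\bv_2$ for the induced characters $\chi(\psi,\rho)$. The paper handles these separately: for $\psi\in\{\psi_{1,0,\epsilon},\psi_{0,1,0}\}$ one has $p\nmid|I_\Gamma(\psi)|$, so no $N$-conjugate of $\bu$ or $\bu\bv_2$ lies in $V\cdot I_\Gamma(\psi)$ and the induced character vanishes there; for $\psi=\psi_{1,0,0}$ (whose inertia group does contain $U$) a direct computation is needed. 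Your remark that ``$\bz$ lies in the Sylow $p$-subgroup'' of $\PGL_2(p)$ is also off: $\bz=\bv_1\in V$ is in the kernel of the inflation, so $\theta_k(\bz)=\theta_k(1)$; only $\bu$ is detected by the $\PGL_2(p)$ character table. Apart from these points your Clifford-theoretic outline matches the paper's use of Gallagher's theorem and Lemma~\ref{l:helper}.
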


\begin{proof}
By Gallagher's theorem, the characters described in the first column of Table \ref{t:nchartab} coincide with the set $\{\chi|_S \mid \chi \in \Irr(N)\}$. The values of characters of form $\chi(\psi, \rho)$ in columns two through four of Table \ref{t:nchartab} are quickly computed using Lemma \ref{l:helper} and Table \ref{t:fpairs}. All values of characters not of this form are deduced from \cite[Table 11.2]{DiMi2}. The values of characters of form $\chi(\psi, \rho)$ in the final two columns are plainly $0$ when $\psi \in \{\psi_{1,0,\epsilon}, \psi_{0,1,0}\}$ (since $p \nmid I_\Gamma(\psi)$ in these cases). The values when $\psi=\psi_{1,0,0}$ may be obtained directly via a straightforward calculation and we omit the details. Finally, since the last $4$ rows of Table \ref{t:nchartab} are linear combinations of the first $6$, the lemma follows from Proposition \ref{prop:brauer}.
\end{proof}

\begin{table}
\tiny{

\renewcommand{\arraystretch}{1.6}
\centering
\caption{$S$-restricted irreducible characters of $N$ when $\cF=\cF_{(1)}$}
\label{t:nchartab}
\begin{tabular}{|c|c|c|c|c|c|c|}
\hline
$\chi$ & $\chi(1)$ & $\chi(\bv_1)$ & $\chi(\bv_2)$ & $\chi(\bv_1+\epsilon \bv_3)$ & $\chi(\bu)$ & $\chi(\bu \cdot \bv_2)$ \\ \hline
$1_S$ &  $1$ &  $1$ &  $1$ &  $1$ &  $1$ &  $1$ \\
$\theta_{p-1}$ &  $p-1$ &  $p-1$ &  $p-1$ &  $p-1$ &  $-1$ &  $-1$ \\
$\chi(\psi_{1,0,0})$ &  $p^2-1$ &  $-1$ &  $p-1$ &  $-(p+1)$ &  $p-1$ &  $-1$ \\
$\chi(\psi_{1,0,0},\rho_{p-1})$ &  $(p^2-1)(p-1)$ &  $-(p-1)$ &  $(p-1)^2$ &  $-(p^2-1)$ &  $-(p-1)$ &  $1$ \\
$\chi(\psi_{1,0,\epsilon})$ &  $p(p-1)^2/2$ &  $-p(p-1)/2$ &  $0$ &  $p$ &  $0$ &  $0$ \\
 $\chi(\psi_{0,1,0})$ &  $p(p^2-1)/2$ &  $p(p-1)/2$ &  $-p$ &  $0$ &  $0$ &  $0$ \\ \hline
 
 $\theta_p = \theta_{p-1}+1_S$ & $p$ & $p$ & $p$ & $p$ & $0$ & $0$ \\
 $\theta_{p+1}=\theta_{p-1}+2 \cdot 1_S$ & $p+1$ & $p+1$ & $p+1$ & $p+1$ & $1$ & $1$ \\
 
 $\chi(\psi_{1,0,\epsilon},\rho_2)=2\chi(\psi_{1,0,\epsilon})$ &$p(p-1)^2$ &$-p(p-1)$&$0$&$2p$&$0$&$0$\\
 $\chi(\psi_{0,1,0},\rho_2)=2\chi(\psi_{0,1,0})$  & $p(p^2-1)$ & $p(p-1)$ &$-2p$&$0$&$0$&$0$\\ \hline

\end{tabular}
}
\end{table}

\begin{table}
\tiny{

\renewcommand{\arraystretch}{1.6}
\centering
\caption{A basis $B_\cF$ for $\Ch(S)^\cF$ when $\cF=\cF_{(1)}$}
\label{t:fchartab}
\begin{tabular}{|c|c|c|}
\hline
$\chi$ & $\chi(1)$ & $\chi(\bv_1)=\chi(\bu)$ \\ \hline

$1_S$ &  $1$ &  $1$    \\

$\chi(\psi_{1,0,0},\rho_{p-1})$ &  $(p^2-1)(p-1)$ &  $-(p-1)$ \\

$\chi(\psi_{0,1,0})+\chi(\psi_{1,0,\epsilon})$ &  $p^2(p-1)$ &  $0$  \\

$\theta_{p-1}+\chi(\psi_{1,0,0})$ &  $(p-1)(p+2)$ &  $p-2$ \\

$\frac{p-1}{2}\left(\chi(\psi_{1,0,0})\right)+\chi(\psi_{0,1,0})$ &  $(p^2-1)(2p-1)/2$ &  $(p-1)^2/2$ \\ \hline

\end{tabular}
}
\end{table}

\begin{prop}\label{p:f1thm}
For all $p \ge 3$, the characters listed in Table \ref{t:fchartab} form a basis for $\Ch(S)^\cF$ when $\cF=\cF_{(1)}$.
\end{prop}

\begin{proof}
Table \ref{t:fchartab} lists a linearly independent subset $B_\cF \subseteq \Ch(S)^\cF$ of size $5=|S^\cF|=|S^\cN|-1$ and we conclude using Theorem \ref{t:main} by taking the determinant of $X_{B_\cF}$.
\end{proof}

\begin{rem}\label{r:nonfact}
Following \cite{CC23}, we say that a fusion system $\cF$ is \textit{factorial} if every character in $\Ch(S)^\cF$ decomposes uniquely as a sum of indecomposable characters, and this holds if and only if $|\Ind(\cF)|=\bk(\cF)$ by \cite[Lem. 5]{S24}. 

For $\cF=\cF_{(1)}$ and $\cN=\cN_{(1)}$ as above, observe that $$\reg_S = 1_S+\theta_{p-1}+\chi(\psi_{1,0,0})+\chi(\psi_{1,0,0},\rho_{p-1})+p\chi(\psi_{1,0,\epsilon})+p\chi(\psi_{0,1,0}),$$
where $\reg_S$ denotes the regular representation of $S$. This, together with \cite[Prop. 2.11]{CC23}, implies that $\cN$ is a factorial fusion system with $\Ind(\cN)=B_\cN$. However, while the $5$ characters listed in Table \ref{t:fchartab} all lie in $\Ind(\cF)$, it is not true that $\cF$ is factorial since the character $\frac{p-1}{2}\cdot\theta_{p-1}+\chi(\psi_{1,0,\epsilon})$ is also $\cF$-indecomposable (and we conjecture that there are no further elements of $\Ind(\cF)$). We thus obtain an infinite family of non-factorial exotic fusion systems, supplementing the realisable examples of such systems constructed in \cite{CG24}. 
\end{rem}


\begin{thebibliography}{99}


\bibitem{AOV16}
{\sc K. Andersen, B. Oliver, and J. Ventura}, Reduced fusion systems over $2$-groups of small order. \emph{Journal of Algebra \bf489} (2017), 310--372.

\bibitem{AKO11}
{\sc M. Aschbacher, R. Kessar, and B. Oliver}, \emph{Fusion Systems in Algebra
  and Topology}. London Mathematical Society Lecture Note Series, 391, Cambridge
  University Press, Cambridge, 2011.

\bibitem{BC20}
{\sc N. B\'{a}rcenas and J Cantarero}, A completion theorem for fusion systems. \emph{Israel Journal of Mathematics \bf236} (2020), 501--531.



\bibitem{BD12}
{\sc R. Boltje and S. Danz}, A ghost ring for the left-free double Burnside ring and an application to fusion systems. \emph{Advances in mathematics \bf229} (2012), 1688--1733.


\bibitem{BCP97}
{\sc W. Bosma, J. Cannon, C. Playoust}, The Magma algebra system. I. The
  user language. \emph{J. Symbolic Comput. \bf24} (1997), 235–-265. 

\bibitem{CC23}
{\sc J. Cantarero and G. Combariza}, Uniqueness of factorization for fusion-invariant representations. \emph{Communications in Algebra \bf51} (2023), 5187--5208.


\bibitem{CG24}
{\sc J. Cantarero and J. Gaspar-Lara}, Fusion-invariant representations for symmetric groups. \emph{Bulletin of the Iranian Mathematical Society \bf50} (2024), 1--29.



\bibitem{CP10}
{\sc M. Clelland and C. Parker}, Two families of exotic fusion systems.  \emph{J. Alg. \bf 323} (2010) 287--304.  
 
 
 
 
 
 \bibitem{C96}
{\sc J. W. S. Cassels}, \emph{An introduction to the geometry of numbers}. Springer, 1996.
 

\bibitem{COS17}
{\sc D. Craven, B. Oliver, and J. Semeraro}, Reduced fusion systems over
  $p$-groups with abelian subgroup of index $p$: II. \emph{Adv. Math. \bf322}
  (2017), 201--268. 
 
 

\bibitem{DiMi2}
{\sc  F. Digne, J. Michel}, \emph{Representations of Finite Groups of Lie
  Type}. Second edition. London Mathematical Society Student Texts, 95.
  Cambridge University Press, Cambridge, 2020. 
 
 \bibitem{KMS4}
{\sc R. Kessar, G. Malle and J. Semeraro}, Partial character tables for $\ZZ_\ell$-spetses. Preprint. arXiv:2507.08502, 2025.

\bibitem{L24}
{\sc T. Lawrence}, Representation rings of fusion systems and Brauer characters. Preprint. arXiv:2409.03007, 2024.

\bibitem{M18}
{\sc R. Moragues Moncho}, Fusion systems on $p$-groups with an extraspecial subgroup of index $p$. \emph{PhD, University of Birmingham} (2018).


\bibitem{O82}
{\sc J. Olsson}, Aus dem Nachlass von Richard Brauer. (1982), 1--74.



\bibitem{P16}
{\sc S. Park}, Realizing fusion systems inside finite groups. \emph{Proc. AMS \bf144} (2016), 3291--3294. 

\bibitem{PS21}
{\sc C. Parker, J. Semeraro}, Algorithms for fusion systems with applications
  to $p$-groups of small order. \emph{Math. Comp. \bf331} (2021), 2415--2461. 



\bibitem{P51}
{\sc E. J. F.  Primrose}, Quadrics in finite geometries. \emph{Mathematical Proceedings of the Cambridge Philosophical Society. \bf47} (1951). 



\bibitem{R06}
{\sc K. Ragnarsson}, Classifying spectra of saturated fusion systems. \emph{Algebraic \& Geometric Topology \bf6} (2006), 195--252.

\bibitem{R16}
{\sc S. P. Reeh}, Transfer and characteristic idempotents for saturated fusion systems. \emph{Advances in mathematics \bf289} (2016), 161-211.


\bibitem{RV04}
{\sc A. Ruiz and A. Viruel}, The classification of $p$-local finite groups over the extraspecial group of order $p^3$ and exponent $p$. \emph{Mathematische Zeitschrift \bf248} (2004), 45--65.


\bibitem{S24}
{\sc B. Sambale}, Fusion invariant characters of $p$-groups. \emph{Communications in Algebra \bf53} (2025), 2328--2332.

\bibitem{S26} {\sc J. Shareshian}, Question about the $p$-part of the length of a $G$-conjugacy class of a $p$-element, Communication  on the MathOverflow website (2026), available online at 
\texttt{https://mathoverflow.net/questions/510251}
\end{thebibliography}
\end{document}